\newtheorem{theorem}{Theorem}[section]
\newtheorem{lemma}[theorem]{Lemma}
\newtheorem{proposition}[theorem]{Proposition}
\theoremstyle{definition}
\newtheorem{remark}[theorem]{Remark}
\numberwithin{equation}{section}
\newdimen\AAdi%
\newbox\AAbo%
\def\AAk#1#2{\setbox\AAbo=\hbox{#2}\AAdi=\wd\AAbo\kern#1\AAdi{}}%
\def\eqref#1{(\ref{#1})}
\def\eqlabel#1{\def\@currentlabel{#1}}
\def\formula#1{\def\@tempa{#1}\let\@tempb\theequation\def\theequation{%
\hbox{#1}}\def\@currentlabel{(\theequation)}$$}
\def\endformula{\leqno\hbox{(\@tempa)}$$\@ignoretrue\let\theequation\@tempb}
\def\given{\hskip5\p@\relax\vrule\@width.4\p@\hskip5\p@\relax}
\newcommand{\open}[1]{%
\par\normalfont\topsep6\p@\@plus6\p@\trivlist\item[\hskip\labelsep\itshape#1%
\@addpunct{.}]\ignorespaces}
\DeclareRobustCommand{\close}[1]{%
  \ifmmode % if math mode, assume display: omit penalty etc.
  \else \leavevmode\unskip\penalty9999 \hbox{}\nobreak\hfill
  \fi
  \quad\hbox{$#1$}}
\newlength{\toskip}\settowidth{\toskip}{(\theequation)}
\def\r{\right}
\def\l{\left}
\def\dsum{\displaystyle\sum}
\def\dlim{\displaystyle\lim}
\begin{document}
%%%%%%%%%%%%%%%%%%%%%%%%%%%%%%%%%%%%%%%%%%%%%%%%%%%%%%%%%%%%%%%%%%%%%%%
\date{\today}

\title[Functional Inequalities  on Time-Varying  Manifolds]{Diffusion Semigroup on  Manifolds with Time-Dependent Metrics}

\author[L.-J.  Cheng]{\textbf{Li-Juan Cheng}}
\address{{\bf {Li-Juan} CHENG}\\
Department of Applied Mathematics, Zhejiang University of Technology, \\
 Hangzhou 310023,   The People's Republic of China.}
\email{chenglj@zjut.edu.cn}

\begin{abstract}
Let $L_t:=\Delta_t +Z_t $, $t\in [0,T_c)$ on a differential manifold equipped with a complete geometric flow $(g_t)_{t\in [0,T_c)}$, where $\Delta_t$ is the Laplacian operator induced by the metric $g_t$ and $(Z_t)_{t\in [0,T_c)}$ is a family of $C^{1,\infty}$-vector fields. In this article, we present a number of equivalent  inequalities for the lower bound curvature  condition, which include
  gradient inequalities, transportation-cost inequalities, Harnack inequalities and other   functional inequalities for the semigroup associated with diffusion processes generated by $L_t$. To this end,  we
establish derivative formulae for  the associated semigroup
and  construct  couplings processes for these diffusion
processes by parallel displacement and reflection.
\end{abstract}

\maketitle

\textit{ Key words : $L_t$-diffusion processes, geometric flow, curvature, coupling, transportation-cost inequality, Harnack inequality, gradient inequality}

\bigskip

\textit{ MSC 2010 : 60J60, 58J65, 53C44.}
\bigskip

\section{Introduction}
In this article, we want to clarify the  connection between the behavior
of  distributions of  diffusion processes, and the geometry of
their underlying manifold carrying a geometric flow of complete Riemannian metrics, more precisely,
 a $d$-dimensional  differential manifold $M$ equipped with  a family of complete
Riemannian metrics $(g_t)_{t\in [0,T_c)}$ for some $T_c\in (0,\infty]$, which is $C^1$ in $t$.
 Let $\nabla^t$ and $\Delta_t$ be the Levi-Civita connection and the  Laplace-Beltrami
 operator associated with the metric $g_t$, respectively.
For simplicity, we take the notation: for $X, Y\in TM$,
\begin{align*}
    &\mathcal{R}_t^Z(X,Y):={\rm Ric}_t(X,Y)-\l<\nabla^t_XZ_t, Y\r>_t-\frac{1}{2}\partial_tg_t(X,Y),
\end{align*}
 where ${\rm Ric}_t$ is the Ricci curvature tensor with respect to the metric $g_t$, $(Z_t)_{t\in [0,T_c)}$ is a $C^{1,\infty}$-family of vector fields, and $\l<\cdot,\cdot\r>_t:=g_t(\cdot,\cdot)$.
Consider the diffusion process $X_t$ generated by   $L_t:=\Delta_t+Z_t$ (called  $L_t$-diffusion process $X_t$), which is assumed to be non-explosive before $T_c$. Let $\{P_{s,t}\}_{0\leq s\leq t<T_c}$ be the semigroup associated with $X_t$.
 The main work of this article is to study
 behaviors of $P_{s,t}$  by
using a new curvature condition, i.e. the low bound of $\mathcal{R}_t^Z$. Compared  with the usual Bakry-Emery's
curvature condition, it contains an additional term from the time derivative of the metric.

When the metric is independent of $t$, many excellent scholars did deep
research on the development of stochastic analysis on manifolds. In
\cite{Bismut,EL,Thalmaier},  derivative formulae for the associated diffusion
semigroup, known as the Bismut-Elworthy-Li formula, was given by
constructing a damped gradient operator. This formula was later applied to
 gradient estimates, transportation-cost inequalities and other important functional inequalities,
see for instance  \cite{Bakry94,Bakry, BE,BBG, Sturm, WEq,Wbook2}. Besides, coupling methods play
 an important role on stochastic analysis. In \cite{LR},   T. Lindvall and L.C.G. Rogers
 introduced the coupling processes for multi-dimensional diffusion processes, this idea was then extended by W.S. Kendall \cite{Ken} and M. Cranston \cite{Cr} to Riemannian manifolds, and  further  well refined in \cite{Cr, Wbook1}.   It is worth mentioning that by using coupling methods, M.-F. Chen and F.-Y. Wang \cite{CW94, CW97b, CW97a} gave  auxiliary results for estimates of the first eigenvalue on Riemannian manifolds.  Moreover,   based on constructing suitable coupling processes, some  equivalent important functional inequalities, including dimension-free Harnack inequalities, transportation-cost inequalities and gradient inequalities,  were presented for the  lower bound curvature condition (see
 e.g. \cite{WEq,Wbook2}  and reference therein).  For the case of manifolds  with
 boundary, we refer the readers to  \cite{ W09, W10,Wbook2} for details.  All these work motivate us to extend  the derivative
formulae and coupling processes to the time-inhomogeneous case, which are then applied to deriving some important functional inequalities for the diffusion semigroup.

Before moving on, let us briefly recall some known results in the time-inhomogeneous Riemannian setting.
In 2008, M. Arnaudon, K. Coulibaly and A. Thalmaier  \cite{ACT} first
  constructed the $g_t$-Brownian motions (i.e., the diffusion generated by $\frac{1}{2}\Delta_t$),  and established the Bismut
formula under the Ricci  flow, which in particular implies  gradient estimates of the associated heat semigroup, see also \cite{C11} for details. Next, by constructing horizontal diffusion processes, K. Coulibaly  \cite{ACT1} investigated
optimal transportation inequalities on time-varying manifolds. Moreover, K. Kuwada and R. Philipowski \cite{Ku} studied  the non-explosion of $g_t$-Brownian
motions   under some super Ricci flow. We would like to indicate that K.  Kuwada \cite{Ku3} has developed  coupling methods to estimate
the gradient of the semigroup by constructing a sequence  of time-inhomogeneous geodesic random walks.
Very recently, R. Haslhofer and A. Naber characterize
%In this artile, we aim to give a intuitive construction of the coupling
% by solving a stochastic differential equation.  When the metric is fixed, this method is due to Wang, see \cite[Chapter 3]{Wbook1}.
%And it is further applied to transportation-cost
%inequalities and gradient estimation.
%Note that in \cite{Ku3}, the coupling process was constructed as the
%limit of a sequence  of time-inhomogeneous geodesic random walks,
%which avoids dealing with the cut-locus, but the coupling process is not direct and the proof
%is relatively complex. In this paper, we aim to give a intuitive construction of the coupling
%based on Wang's method (see \cite[Chapter 3]{Wbook1}). And it is further applied to transportation-cost
%inequalities and gradient estimation.
For
more development on  stochastic analysis in this setting, see \cite{KR} for  reviewing   the monotonicity of the
$\mathcal{L}$-transportation cost from  a probabilistic viewpoint;
see \cite{Ch1} for  stochastic analysis on the path space
over time-inhomogeneous manifolds.

The paper is organized as follows.  In Section 2, we will give  some basic notations and notions
for time-varying differential manifolds,
  introduce the construction of   $L_t$-diffusion processes and then
present several auxiliary results for the related diffusion semigroup.

In Section 3, we will show that, by constructing suitable local martingales,  a local derivative formula can be obtained by using local geometry of the manifold.  Moreover,  a globe derivative formula  will be studied here as well.  These arguments essentially follow from
A. Thalmaier \cite{Thalmaier} with some necessary modifications to our inhomogeneous context, since e.g.  geometric quantities are time-dependent and the underlying process is time-inhomogeneous.  As applications, we will study  local gradient inequalities for the diffusion semigroup.   By applying the ideas from \cite{TW}, the
main issue left should be to find a suitable testing function  to give a subtle upper estimate.

Next,  we will  show a number of
equivalent functional inequalities of the semigroup for the lower bound of $\mathcal{R}^Z_t$.  Inspired from \cite{Bakry,LR}, our first issue is to give some asymptotic formulae to characterize  $\mathcal{R}^Z_t$. Based on these formulae and the globe derivative formula established in Theorem \ref{1t3},   a number of equivalent gradient inequalities for  the lower bound of $\mathcal{R}^Z_t$ will be given in Theorem \ref{3t1} below. Then  we will  further consider  dimension-free Harnack inequalities  by using the gradient inequality established in Theorem \ref{3t1}. We  point out that when the metric is independent of $t$, these equivalences are well-known (see e.g. \cite{Bakry94,Bakry,BE,Wbook2}). Here, we make some necessary modification as the generater operator and geometric quantities are time-dependent.

In Section 4,   we will construct  coupling processes for $L_t$-diffusion processes by   solving
SDEs on $M\times M$ with
 singular coefficients on the space-time cut-locus. Compared with \cite{Ku3}, it looks   straightforward. When the metric is independent of $t$, our construction is due to \cite{Wbook1}.  In our setting,  as
the SDEs we consider below are non-autonomous  and the radiant process is non-differentiable on the space-time cut-locus, we need apply some results from \cite{Ku, MT} and find a suitable approximation to our desired process.
As applications,  we  will consider the transportation-cost inequality on time-inhomogeneous spaces.
We would like to indicate that very recently  the author uses coupling methods to investigate transportation-cost inequalities on path space of $L_t$-diffusion processes \cite{Ch2} and
dimension-free Harnack inequalities on time-varying manifolds with boundary \cite{Ch3}.

We end this section by making some conventions on the notations. Let $\mathcal{B}_b(M)$ be the set of all measurable functions,  $C^p_0(M)$ the set of all $C^p$-smooth real functions with compact supports on $M$ and $C_{c}^p(M)$ the set of all $C^p$-smooth real functions with constant outside a compact set.
  For any two-tensor $\mathbf{T}_t$ and any function $f$,
we write $\mathbf{T}_t\geq f$, if $\mathbf{T}_t(X,X)\geq
f\l<X,X\r>_t$ holds for $X\in TM$.
For any functions  $f$ and $\varphi$, respectively, defined on $[0,T_c)\times M$  and  $[0,T_c)\times M\times M$, we simply write $f_t(x):=f(t,x)$ and $\varphi_t(x,y):=\varphi(t,x,y)$, $t\in [0,T_c), x,y\in M$. $\mathbb{E}^{(s,x)}$ and $\mathbb{P}^{(s,x)}$ denote, respectively,  the expectation and the probability taken for the underlying process starting from $x$ at time $s$. When $s=0$, we simply write $\mathbb{E}^x:=\mathbb{E}^{(0,x)}$ and $\mathbb{P}^x=\mathbb{P}^{(0,x)}$.
\medskip

\section{Preliminaries}
%\hspace{0.5cm} This section is devoted to the construction of  $L_t$-diffusion processes, and giving some criteria about the non-explosion of them.
%\subsection{ $L_t$-diffusion processes and Kolmogrov equations}
Let $\mathcal{F}(M)$  be the frame bundle over $M$ and
$\mathcal{O}_{t}(M)$
the orthonormal frame bundle over $M$ with respect to the metric $g_t$.  Define $\mathbf{p}:\mathcal{F}(M)\rightarrow M$  the projection
from $\mathcal{F}(M)$ onto $M$.
 For any $u\in \mathcal{O}_t(M)$,
let $H^{t}_{X}(u)$ be the $\nabla^{t} $-horizontal lift
of $X\in T_{{\bf p}u}M$ and $H_{i}^t(u)=H_{ue_i}^t(u), i=1,2,\cdots, d$, where $\{e_{i}\}_{i=1}^{d}$ is the canonical orthonormal basis of
$\mathbb{R}^d$.
 Set $\{V_{\alpha, \beta}(u)\}_{\alpha,\beta=1}^d:=Tl_u(\exp(E_{\alpha,\beta}))$, $u\in \mathcal{F}(M)$ be the canonical basis
of vertical  fields over $\mathcal{F}(M)$, where $E_{\alpha,\beta}$  is a canonical basis
of $\mathcal{M}_d(\mathbb{R})$, $\mathcal{M}_d(\mathbb{R})$ is the $d\times d$ matrix space on $\mathbb{R}$,
and $l_u:Gl_d(\mathbb{R})\rightarrow \mathcal{F}(M)$ is the left multiple operator from the general linear group $Gl_d(\mathbb{R})$ to $\mathcal{F}(M)$, i.e. $l_u\exp(E_{\alpha,\beta})=u\exp(E_{\alpha,\beta})$.

Let $B_t:=(B_t^1,B_t^2,\cdots,B_t^d)$ be a $\mathbb{R}^d$-valued Brownian motion on  a complete
filtered probability space $(\Omega,\{\mathscr{F}_t\}_{t\geq 0}, \mathbb{P})$.
To construct a $L_t$-diffusion process,
we first need to construct the corresponding horizontal diffusion process
%generated by $\Delta _{\mathcal {O}_t(M)}+H_{Z_t }^t$
 by solving the Stratonovich SDE
\begin{align}\label{SDE-u}
\begin{cases}
 d u_t=\sqrt{2}\dsum_{i=1}^{d}H_{i}^t(u_t)\circ d
B_t^{i}+H_{Z_t }^t(u_t)d t-\frac{1}{2}\dsum_{\alpha,
\beta=1}^{d}\mathcal{G}_{\alpha,\beta}(t, u_t)V_{\alpha \beta}(u_t)d t,\medskip\\
u_s\in \mathcal{O}_s(M),\ {\bf p}u_s=x,\ s\in [0,T_c),
\end{cases}
\end{align}
 %$\Delta_{\mathcal{O}_t(M)}$ is the horizontal Laplace operator on $\mathcal{O}_t(M)$;
%$H_{Z_t }^t(u_t)$ is $\nabla^{t}$-horizontal lift  $Z_t $.
where $\mathcal{G}_{\alpha,\beta}(t,u_t):=\partial_tg_t(u_te_{\alpha}, u_te_{\beta})$.  Similarly as explained in \cite{ACT}, the last term is essential to ensure $u_t\in \mathcal{O}_t(M)$.
 Since $\{H_{Z_{t} }^{t}\}_{t\in [0,T_c)}$ is $C^{1,\infty}$-smooth, the equation has a
unique solution up to its life time $\zeta:=\dlim_{n\rightarrow \infty}\zeta_n$, where
\begin{align}\label{zeta-n}
\zeta_n:=\inf\{t\in [s,T_c):\rho_t(\mathbf{p}u_0, \mathbf{p}u_t)\geq n\}, \ n\geq 1,\ \ \inf\varnothing:=T_c,
\end{align}
and $\rho_t$ stands for  the Riemannian distance induced by the metric $g_t$. Let $X_t^{(s,x)}=\mathbf{p}u_t$. Then $X_t^{(s,x)}$ solves the
equation
$$d X_t^{(s,x)}=\sqrt{2} u_t\circ d B_t+Z_t(X_t^{(s,x)})d t,\  \ X_s^{(s,x)}=x:=\mathbf{p}u_s$$
up to the life time $\zeta$. By the It\^{o} formula, for any $f\in
C_0^2(M)$ and $t\in [s,T_c)$,
$$f(X_t^{(s,x)})-f(x)-\int_s^tL_rf(X^{(s,x)}_r)d r=\sqrt{2}\int_s^t\l<u_r^{-1}\nabla^rf(X^{(s,x)}_r), d B_r\r>$$
is a martingale up to $\zeta$, where $\l<\cdot,\cdot\r>$ is the inner product on $\mathbb{R}^d$; that is, $X_t^{(s,x)}$ is the diffusion process
generated by $L_t$. When
$Z_t =0$,  the process $Y_t:=X^{(s,x)}_{t/2}$, $t\in [2s, 2T_c)$ is generated by
$\frac{1}{2}\Delta_{t/2} $ and is known as the $g_{t/2}$-Brownian motion. When $s=0$, we simply write $X_t^{(0,x)}$ as $X_t^{x}$ or $X_t$ without confusion.

  Throughout this article, we assume the diffusion process $X_t$ generated by $L_t$ is non-explosive before time $T_c$ (see  \cite{KR} for sufficient conditions to ensure the non-explosive).
Then this process  gives rise to an inhomogeneous Markov  semigroup $\{P_{s,t}\}_{0\leq
s\leq t< T_c}$ on $\mathcal{B}_b(M)$:
$$P_{s,t}f(x):=\mathbb{E}(f(X_t^{(s,x)})),\ x\in M,\ \  f\in \mathcal{B}_b(M),$$
 which is called the
diffusion semigroup generated by $L_t$. Here and in what follows,
$\mathbb{E}$ stands for the expectation
 taken for the underlying process. Moreover,  the Markov semigroup $\{P_{s,t}\}_{0\leq s\leq t<T_c}$ enjoys the following properties.
\begin{proposition}\label{0t1}
The following properties hold true.
\begin{itemize}
 \item [(i)] For any $0\leq s\leq t<T_c$, $f\in \mathcal{B}_b(M)$ and $x\in M$, there exists a unique probability measure $p_{s,t}(x,dy)$ such that
   $$P_{s,t}f(x)=\int_M f(y)\,p_{s,t}(x,d y).$$
  \item [(ii)] The measure $p_{s,t}(x, dy)$ is equivalent to the volume measure $\mu_t$ with respect to the metric $g_t$, that is
  \begin{equation}\label{fundamental-p}
 p_{s,t}(x, dy)=p(s,x;t,y)\mu_t(dy),
  \end{equation}
 where $p(s,x;t,y)$ is a fundamental solution to the following equation:
\begin{align*}
 \begin{cases}
      \frac{\partial}{\partial s}p(\cdot,x; t,y)(s)=-L_sp(s,\cdot;t,y)(x);  &\\
      \lim _{t\downarrow s}p(s,x; t,\cdot)=\delta_{x}(\cdot). &
   \end{cases}
\end{align*}
\item [(iii)]
For any $f\in \mathcal{B}_b(M)$ and $0\leq s\leq t<T_c$,
$$P_{s,t}f(x)=\int_M f(y) p(s,x;t,y)\mu_t(d y).$$
  Moreover, the backward Kolmogorov equation
\begin{align}\label{Km2}
\frac{d}{d s}P_{s,t}f=-L_sP_{s,t}f
\end{align}
holds for all $ 0\leq s\leq t< T_c$.
 \item [(iv)]
  If  $f\in C^{1,2}([0,T_c)\times M)$ such that $\|(L_t+\partial_t)f\|_{\infty}:=\sup_{x\in M}|(L_t+\partial_t)f|(x)$ is locally
bounded with respect to $t$ in $[0,T_c)$, then the forward Kolmogorov equation
\begin{align}\label{Km1}
\frac{d}{d t}P_{s,t}f(t,x)=P_{s,t}(L_t+\partial_t)f(t,x)
\end{align}
holds for all $0\leq s\leq t< T_c$.
\end{itemize}

\end{proposition}
\begin{proof}
(a). Let $X_t$ be a $L_t$-diffusion process. By the Markov property,  for $0\leq s< t<T_c$ and $x\in M$, let $p_{s,t}(x,\cdot)=\mathcal{L}(X_t|X_s=x)$, the law of $X_t$ conditional $X_s=x$. Then
$$P_{s,t}f(x)=\mathbb{E}(f(X_t^{(s,x)}))=\int_Mf(y)p_{s,t}(x, dy)$$
for any $f\in \mathcal{B}_b(M)$.

(b). First,  suppose $M$ is compact.   Then it is easy for us to see from \cite{Gu} that by replacing the Laplacian operator $\Delta_t$  with $\Delta_t+Z_t$, and  repeating the same argument
as in the proof of \cite[Theorem 2.1]{Gu},  the existence of a fundamental solution on compact manifolds can be derived similarly.  Then, for general case,
given an open set $\Omega \subset M$, one can treat $\Omega$ as a manifold itself. Let us denote by
$p^{\Omega}$ the heat kernel of $\Omega$. Minimality of the heat kernel implies that $p^{\Omega}$ vanishes on
the boundary $\partial \Omega$, at least if $\partial \Omega$ is smooth. This implies, in turn, that $p^{\Omega}$ increases
on enlarging of $\Omega$ (see \cite{Do83}).
The way the global heat kernel $p$ is constructed in $M$  is the following: one first
defines $p^{\Omega}$ for precompact sets  $\Omega$ and then lets
$p:= \lim_{k\rightarrow \infty} p^{\Omega_k}$
where $\{\Omega_k\}$ is an increasing sequence of precompact open sets with smooth boundaries,
which exhaust  $M$. The differentiability conditions on $p(s, x; t, y)$ follow from those on fundamental solutions
in the fixed metric case, together with the $C^{1,\infty}$-smoothness of the metric.

(c).
Let $u(s,x)=\int_M p(s,x; t,y)f(y)\mu_t(dy)$. Then it  is easy to see that $u$ is a solution to the following heat equation:
\begin{align*}
  \begin{cases}
\frac{\partial}{\partial s}u(s,x)=-L_su(s,\cdot)(x);\\
u(t,x)=f(x).
\end{cases}
\end{align*}
Thus by the Feymann-Kac formula, we have
 $$\int_M f(y)p_{s,t}(x,dy)=P_{s,t}f(x)=u(s,x)=\int_M p(s,x; t,y)f(y) \mu_t(dy).$$

(d).  By using the It\^{o} formula,
$$ f(t,X_t^{(s,x)})-f(s,x)= M_t+ \int _s^t(L_r+\partial_r)f(r,X^{(s,x)}_r) d r,$$
with $M_t$ being a true martingale due to the boundedness of $(L_r+\partial_r)f$ on each interval $[s,t]$,   we prove the result by taking expectations on both sides of the above equation.
\end{proof}

\begin{remark}\label{rem-fun}
In \cite{Gu}, the author proved the existence of fundamental solution by using the parametrix method introduced by E. Levi.
It is easy to see from   \cite{Gu, ll62} that the  fundamental solution also exists for
the equation  by replacing  $L=\Delta_t+\frac{\partial}{\partial t}$  with a locally uniformly parabolic operator on a manifolds $M$, which is  written in locally coordinates as
$$Lu=\sum_{i,j} a^{i,j}(x,t) \frac{\partial^2 u}{\partial x^i\partial x^j}+\sum_{i}b^{i}(x,t)\frac{\partial u}{\partial x^i}+\frac{\partial u}{\partial t},$$
and all the coefficients involved are at least $C^1$.
\end{remark}

%: for any $0\leq s< t< T_c$,
 %\begin{align*}
 %&P_{s,t}f=f+\int_s^tL_rP_{r,t}fd r,\ \mbox{for } f\in \mathcal{B}_b(M);\\
 %&P_{s,t}f=f+\int_s^tP_{s,r}L_rfd r,\ \mbox{for } f\in C_0^2(M).
%\end{align*}
Given $T\in (0,T_c)$,  we deduce from   (\ref{Km2}) that $P_{s,T}f, s\in [0,T]$ is the
solution to the  following backward heat equation
\begin{align}\label{Chy0}
\begin{cases}
\partial_s u(\cdot,x)(s)=-L_su(s,\cdot)(x),\ s\in [0,T];\\
u(T,x)=f(x).
\end{cases}\end{align}
Indeed, the theory presented here is meant to be applied to our familiar forward heat equation by a time reversal. More precisely, let $(X_t^T)_{t\in [0,T]}$ be a
$L_{(T-t)}$-diffusion process, which is assumed to be non-explosive before time $T$, and  $\{\overline{P}_{s,t}\}_{0\leq s\leq
t\leq T}$ be the associated semigroup.  Then $\overline{P}_{T-t,T}f,\ t\in [0,T]$ solves the equation
\begin{align}\label{Chy}
\begin{cases}
\partial_t u(\cdot,x)(t)=L_tu(t,\cdot)(x),  \  \  t\in [0,T],\\
u(0,x)=f(x).
\end{cases}\end{align}
We would like to indicate that in \cite{ACT}, derivative formulae
and gradient estimates of the semigroup $\{\overline{P}_{s,t}\}_{0\leq s\leq
t\leq T}$ have been investigated   by using the  $g_{(T-t)}$-Brownian motion under Ricci flow.
\medskip

\section{Derivative formulas and their applications}
\subsection{Derivation formulas}
In this subsection, we   establish derivative formulas on the local and whole manifold respectively, which is then applied to gradient estimates, dimensional-free Harnack inequalities and other functional inequalities of the semigroup.

For $u\in \mathcal{O}_t(M)$,  the lift operators $\mathcal{R}_t^{Z}(u), \  \mathcal{G}_t(u)\in \mathbb{R}^d\otimes\mathbb{R}^d$ are defined by
\begin{align*}
&\mathcal{R}^{Z}_t(u)(a,b)=\l<\mathcal{R}^{Z}_t(u)a,b\r>=\mathcal{R}_t^{Z}(ua,
u b)\ \ \mbox{and} \  \
 \mathcal{G}_t(u)(a,b)=\partial_tg_t(ua,ub),
\end{align*}
where $ a,b\in \mathbb{R}^d$.
Now, let us  introduce the $\mathbb{R}^d\otimes \mathbb{R}^d$-valued
process $\{Q_{s,t}\}_{0<s\leq t<T_c}$, which solves the ODE: for $a,b \in \mathbb{R}^d$,
\begin{align}\label{damp}
\begin{cases}
&\frac{d \l<Q_{s,t}a,b\r>}{d
t}=-\l<\mathcal{R}^Z_t(u_t)Q_{s,t}a,b\r>,\\
&Q_{s,s}=I,
\end{cases}\end{align}
 where $u_t$ is the horizontal $L_t$-diffusion process of $X_t$.
 When $s=0$, we simply write $Q_t:=Q_{0,t}$.

 If there exists $K\in C([0,T_c)\times M)$  such that
$\mathcal{R}_t^{Z}\geq
K(t,\cdot) $ for each $t\in [0,T_c)$,   then, from \eqref{damp}, it follows that
$$\|Q_{s,t}\|\leq \exp{\l[-\int_s^tK(r, X_r)d r\r]},$$
where $\|\cdot\|$ is the operator norm on $\mathbb{R}^d$.We now introduce a local version of derivative formula of $P_{s,t}$.
When it reduces to the fixed metric case, it looks more like that  given by  A.Thalmaier \cite{Thalmaier}.
\begin{theorem}\label{1t1}
For $0\leq s<t<T_c$, let $x\in M$ and $D$ be a  compact domain in
$[s,t]\times M$ such that $(s,x)\in D^{\circ}$, the interior of $D$.
Let $\tau_D=\inf\{r\in (s,T_c): (r,X^{(s,x)}_{r})\notin D\}$ and
$F\in C^{1,2}([s,t]\times D)$ satisfy the heat equation
\begin{equation}\label{add-V2-1}
\partial_r F(\cdot,x)(r)=-L_rF(r,\cdot)(x),
\end{equation}
for all $(r,x)\in [s, t]\times D$. Then for any adapted absolutely continuous $\mathbb{R}_+$-valued
process $h$ such that $h(s)=0$ and $h(r)=1$ for all $r\geq
{t}\wedge{\tau_D}$, and $\mathbb{E}^{(s,x)}(\int_s^th'(r)^2d
r)^{\alpha}<\infty$ for some $\alpha >\frac{1}{2}$, we have
\begin{align}\label{D-formula}
(u_{s})^{-1}\nabla^sF(s,\cdot)(x)=\frac{1}{\sqrt{2}}\mathbb{E} ^{(s,x)}\l\{F({t\wedge \tau_{D}}
, X_{t\wedge \tau_D})\int_s^th'(r)Q^*_{s,r}d B_{r}
\r\},
\end{align}
where $Q^*_{s,r}$ is the transpose of $Q_{s,r}$.
\end{theorem}
\begin{proof}
%First, let $F_s=F(s,\cdot)$ for simplicity.
Without loss of generality, we assume $s=0$ and simply drop the upper script $x$ in $X_t^x$. Since $Z$ is a $C^{1,\infty}$ vector field and $F$ is a classic solution of \eqref{add-V2-1}, then it is easy to see that $F\in C^{1,3}([s,t]\times D)$.  For $u\in \mathcal{O}_r(M)$ and $r\in [0,T_c)$,
let $G(u,r)=u^{-1}\nabla^rF_r({\bf p}u)$.
Then according to \eqref{add-V2-1}, we have
\begin{align*}
\frac{\partial}{\partial r}G(u,r)=\sum_{i=1}^d\frac{\partial}{\partial r}ue_iF_r({\bf p}u)e_i=-\sum_{i=1}^due_i(L_rF_r)({\bf p} u)e_i=-u^{-1}\nabla^{r}L_rF_r({\bf p}u).
\end{align*}
By this and the Bochner-Weitzenb\"{o}ck formula, we see that for $r\in [0,t]$,
\begin{align}\label{2.1}
\frac{\partial}{\partial r}G(u,r)=-L_{\mathcal{O}_r(M)}G(\cdot,r)(u)+\l(\mathcal{R}_r^Z(u)+\frac{1}{2}\mathcal{G}_r(u)\r)G(u,r),
\end{align}
where $L_{\mathcal{O}_r(M)}:=\Delta_{\mathcal{O}_r(M)}+H_{Z_r}^r$ and  $\Delta_{\mathcal{O}_r(M)}$ is the
horizontal Laplacian operator of $\Delta_r$.
%;$$\l<({\rm Ric}_s+\nabla^sZ_s)^{\sharp}(X), Y\r>_s={\rm Ric}_s(X,Y)+\l<\nabla^s_XZ, Y\r>_s,\ \  X,Y\in TM.$$
On the other hand, noting that $u_r$ is the solution to  \eqref{SDE-u}, and
using the It\^{o} formula,  one obtains that for fixed $t_0\in [0,t]$,
\begin{align}\label{Ieq}
d G(u_r, t_0)=&d M_r+L_{\mathcal{O}_r{(M)}}G(\cdot,
t_0)(u_r)d r-\frac{1}{2}\sum_{\alpha,\beta}\mathcal{G}_{\alpha,\beta}(t_0,u_r)V_{\alpha,\beta}(u_r)G(\cdot,
t_0)(u_r)d r,
\end{align}
where $d M_r:=\sqrt{2}H^r_{u_rd B_r}G(\cdot,
t_0)(u_r).$
Moreover, the last  term above on the rightmost hand satisfies that
\begin{align*}
-\frac{1}{2}\sum_{\alpha,\beta}&\mathcal{G}_{\alpha,\beta}(t_0,u_r)V_{\alpha,\beta}(u_r)G(\cdot,
t_0)(u_r)\\
&=-\frac{1}{2}\sum_{\alpha,\beta}\mathcal{G}_{\alpha,\beta}(t_0,u_r)(u_re_{\beta}F_r)(X_r)\cdot e_{\alpha}\\
&=-\frac{1}{2}\mathcal{G}_{t_0}(u_r)G(u_r,
t_0).
\end{align*}
 With this, \eqref{Ieq} and (\ref{2.1}), we conclude that
\begin{align}\label{2.4}
d \l<\nabla^rF_r(X_r),u_rQ_ra\r>_r=d\l<G(u_r,r), Q_ra\r>=\sqrt{2}\mathrm{Hess}^r_{F_r}(u_{r}\mathrm{d}B_{r},u_rQ_ra)(X_r),
\end{align}
which implies that $\l<\nabla^rF_r(X_r),u_rQ_ra\r>_r$ is a local martingale.

On the other hand, by the It\^{o} formula, one has
$$d F(r,X_r)=\sqrt{2}\l<\nabla^rF_r(X_r), u_{r}d B_{r}\r>_{r},$$
and
\begin{align*}
F(t\wedge\tau_D,
X_{t\wedge\tau_D})&=F(0,x)+\sqrt{2}\int^{t\wedge\tau_D}_{0}\l<\nabla^rF_r(X_{r}),
u_{r}d B_{r}\r>_{r}.\end{align*}
Therefore, noting that $h'(r)=0$ for $r\geq t\wedge \tau_D$, we have
\begin{align}\label{e:local-eq}
&\mathbb{E}^x\l\{F({t\wedge\tau_D},
X_{t\wedge\tau_ D})\int_{0}^t\l<h'(r)Q_{r}a,
d B_{r}\r>\r\}\nonumber\\
=&\mathbb{E}^x\l\{\l(F(0,x)+\sqrt{2}\int_{0}^{t\wedge\tau_D}\l<\nabla^rF_{r}(X_{r}), u_{r}d
B_{r}\r>_{r}\r)\int_{0}^t\l<h'(r)Q_{r}a,
d B_{r}\r>\r\}.
\end{align}
As  $\int_{0}^t\l<h'(r)Q_{r}a,
d B_{r}\r>$ is a true martingale, we then have
\begin{align*}
&\frac{1}{\sqrt{2}}\mathbb{E}^x\l\{F({t\wedge\tau_D},
X_{t\wedge\tau_ D})\int_{0}^t\l<h'(r)Q_{r}a,
d B_{r}\r>\r\}\nonumber\\
=&\,\mathbb{E}^x\l\{\int_{0}^{t}\l<\nabla^rF_{r}(X_{r}), u_rQ_ra\r>_{r}(h-1)'(r)d r\r\}\nonumber\\
=&\,\mathbb{E}^x\l\{\l[\l<\nabla^rF_{r}(X_{r}),
 u_rQ_ra\r>_{r}\cdot(h-1)(r)\r]\big{|}_0^{t}\r\}-\mathbb{E}^x\int_0^{t}(h-1)(r)d
\l<\nabla^rF_{r}(X_{r}),  u_rQ_ra\r>_{r}\nonumber\\
=&\,\l<\nabla^0F_{0}(x), u_0a\r>_{0},
\end{align*}
where the last step follows from \eqref{2.4} that   $\int_0^t(h-1)(r)d
\l<\nabla^{r}F_{r}(X_r), u_rQ_ra\r>_r$ is a  true martingale.
Now given arbitrary $s\in [0,T_c)$, repeating the above argument, we prove  \eqref{D-formula} and then finish the proof.
% By the Markov property,
%$$F(t\wedge\tau_D, X_{t\wedge\tau_D})=\mathbb{E}(F(t,X_t)|\mathscr{F}_{t\wedge\tau_D}).$$
%By this and \eqref{e:local-eq}, we obtain \eqref{D-formula} directly.
\end{proof}

Next, we introduce the following  globe derivative formula for $P_{s,t}$ without using
hitting time. For the corresponding result in the fixed metric case, we refer the readers to \cite{TW}.
Let  ${\rm Cut}_t(x)$ be the set of the $g_t$-cut-locus of $x$
on $M$.

\begin{theorem}\label{1t3}
Assume that for each $s\in
[0,T_c)$, $(L_s+\partial_s)\rho^2_s\leq c+h_1(s)+h_2(s)\rho^{2}_s $ holds outside $\mathrm{Cut}_s(o)$ for some constant
$c>0$ and some non-negative functions $h_1,h_2\in C([0,T_c))$. If
\begin{align}\label{add-eq1}
\mathcal{R}_{s}^{Z}\geq h_3(s)-16e^{-\int_0^s(h_2(r)+16)d r}\rho^2_s
\end{align}
holds for some $h_3\in C([0,T_c))$, then for all  $0\leq s\leq t<T_c$ and  $h\in C^1([s,t])$
satisfying $h(s)=0$ and  $h(t)=1$,
\begin{align}\label{1-0}
u_s^{-1}\nabla^{s}P_{s,t}f(x)&=\mathbb{E}^{(s,x)}\l\{Q_{s,t}^*u_t^{-1}\nabla^{t}f(X_t)\r\}=\frac{1}{\sqrt{2}}\mathbb{E}^{(s,x)}\l\{f(X_t)\int_s^th'(r)Q^*_{s,r}d
B_r\r\},
\end{align}
where $ x\in
M$ and  $f\in C^1(M)$ such that $f$ is constant outside a compact set. In particular, by taking $h(r)=\frac{(r-s)\wedge (t-s)}{t-s}$,
it holds
$$u_s^{-1}\nabla^sP_{s,t}f(x)=\frac{1}{\sqrt{2}(t-s)}\mathbb{E}^{(s,x)}\l\{f(X_t)\int_s^tQ^*_{s,r}d B_r\r\}.
$$

\end{theorem}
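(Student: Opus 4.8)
\textbf{Proof proposal for Theorem \ref{1t3}.}

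The plan is to obtain the derivative formula on the whole manifold as a limit of the localized formula \eqref{D-formula} from Theorem \ref{1t1}, using the quadratic growth hypotheses to justify the passage to the limit. First I would fix $T\in(t,T_c)$ and set $F(r,x)=P_{r,t}f(x)$ for $r\in[s,t]$, which by the backward Kolmogorov equation \eqref{Km2} solves $\partial_rF=-L_rF$; since $f$ is constant outside a compact set, $F$ is $C^2$ and bounded. For each $n$, take $D_n$ to be a compact domain in $[s,t]\times M$ exhausting $[s,t]\times M$ with $(s,x)\in D_n^\circ$, let $\tau_n=\inf\{r>s:(r,X_r)\notin D_n\}$, and choose the simple deterministic $h(r)=\frac{(r-s)\wedge(t-s)}{t-s}$. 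Since the diffusion is non-explosive and $h$ is deterministic with $h'\in L^2$, the moment condition $\mathbb{E}(\int_s^th'(r)^2\,\vd r)^\alpha<\infty$ is trivially satisfied, so Theorem \ref{1t1} gives
\begin{align*}
u_s^{-1}\nabla^sP_{s,t}f(x)=\frac{1}{\sqrt2}\mathbb{E}\Bigl\{F(t\wedge\tau_n,X_{t\wedge\tau_n})\int_s^th'(r)Q^*_{s,r}\vd B_r\ \Big|\ X_s=x\Bigr\}
\end{align*}
for every $n$. It then remains to let $n\to\infty$: since $\tau_n\uparrow t$ a.s. (by non-explosion) and $F$ is bounded and continuous, $F(t\wedge\tau_n,X_{t\wedge\tau_n})\to f(X_t)$ boundedly, while the stochastic integral $\int_s^{t\wedge\tau_n}h'Q^*_{s,r}\,\vd B_r$ must be shown to converge in $L^2$ to $\int_s^th'Q^*_{s,r}\,\vd B_r$ — this is where the curvature and distance hypotheses enter.

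The key estimate is therefore a uniform $L^{2}$ (or $L^{2+\epsilon}$) bound on $\int_s^t\|Q^*_{s,r}\|^2\,\vd r$, equivalently on $\int_s^t\exp(-2\int_s^rK(v,X_v)\,\vd v)\,\vd r$ where $\mathcal{R}^Z_v\ge K_v$. Here I would use hypothesis \eqref{add-eq1}: it provides the lower bound $K_v=h_3(v)-16e^{-\int_0^v(h_2(r)+16)\vd r}\rho_v^2(X_v)$, so controlling $\|Q^*_{s,r}\|$ reduces to controlling $\int_s^r e^{-\int_0^v(h_2+16)\vd u}\rho_v^2(X_v)\,\vd v$ along the paths. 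To get moment bounds on $\rho_s^2(X_s)$, I would apply the It\^o formula to $e^{-\int_0^s h_2(u)\vd u}\rho_s^2(X_s)$ (handling the cut-locus by the standard argument of \cite{Ku}, Theorem 2, since $\rho_s^2$ is still a viscosity supersolution there): the hypothesis $(L_s+\partial_s)\rho_s^2\le c+h_1(s)+h_2(s)\rho_s^2$ makes the drift of this transformed process integrable, yielding $\mathbb{E}^x e^{-\int_0^sh_2(u)\vd u}\rho_s^2(X_s)\le C(T)(1+\rho_0^2(x))$ on $[0,T]$ by Gronwall. The precise constant $16$ in \eqref{add-eq1} is chosen exactly so that $16$ times this exponential-weighted quadratic moment is absorbed and $\mathbb{E}\exp(32\int_s^t e^{-\int_0^v(h_2+16)\vd u}\rho_v^2\,\vd v)<\infty$, giving the required uniform integrability of $Q^*_{s,r}$ and hence of the stochastic integrals. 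Once the $L^2$ convergence of the stochastic integrals is in hand, the second equality in \eqref{1-0} follows.

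For the first equality in \eqref{1-0}, the expression $\mathbb{E}\{Q_{s,t}^*u_t^{-1}\nabla^tf(X_t)\mid X_s=x\}$, I would instead integrate \eqref{2.4} directly without time-change: on the localizing domains $D_n$, the process $r\mapsto\langle\nabla^rF_r(X_r),u_rQ_ra\rangle_r$ is a local martingale, hence a true martingale after stopping at $\tau_n$, so $\langle\nabla^sF_s(x),u_sa\rangle_s=\mathbb{E}\langle\nabla^{t\wedge\tau_n}F_{t\wedge\tau_n}(X_{t\wedge\tau_n}),u_{t\wedge\tau_n}Q_{s,t\wedge\tau_n}a\rangle$; since $F(t,\cdot)=f$ and $\nabla^tf$ has compact support, letting $n\to\infty$ and using the same quadratic moment bound (this time to dominate $\|Q_{s,t\wedge\tau_n}\|\,|\nabla f|$, noting $|\nabla f|$ is bounded) gives the claim. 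The main obstacle is precisely the moment control of $\|Q_{s,\cdot}\|$: without a two-sided curvature condition one only has $\|Q_{s,r}\|\le\exp(-\int_s^rK)$ with $K$ possibly very negative, so the role of \eqref{add-eq1} together with the distance hypothesis is to force the negative part of $K$ to be dominated by a process whose exponential moments are finite. Everything else — the martingale/integration-by-parts manipulations and the dominated-convergence arguments — is routine once that uniform estimate is established; the final displayed formula is just the special case $h(r)=\frac{(r-s)\wedge(t-s)}{t-s}$, for which $h'\equiv\frac1{t-s}$ on $[s,t]$.
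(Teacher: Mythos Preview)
Your overall organization is right — localize, establish integrability of $\|Q_{s,\cdot}\|$, pass to the limit — but the crucial step, the exponential moment estimate, is not actually carried out in your argument, and the route you sketch does not work.

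You write that Gronwall applied to $e^{-\int_0^sh_2}\rho_s^2(X_s)$ yields the second-moment bound $\mathbb{E}^x e^{-\int_0^sh_2}\rho_s^2(X_s)\le C(T)(1+\rho_0^2(x))$, and then assert that the constant $16$ is chosen ``exactly so that'' this is absorbed into an exponential moment $\mathbb{E}\exp\bigl(32\int e^{-\int(h_2+16)}\rho^2\,\vd v\bigr)<\infty$. But a second-moment bound on $\rho^2$ does not by itself yield finiteness of $\mathbb{E}\exp(\int\rho^2\,\vd v)$; that requires Gaussian-type tail control, not just $L^1$. The paper's mechanism is different and genuinely uses the specific value $16$: with $\lambda(r)=\int_0^r(h_2+16)$, the It\^o inequality for $e^{-\lambda(r)}\rho_r^2(X_r)$ gives
\[
16\int_0^{t}e^{-\lambda}\rho^2\,\vd r \;\le\; \rho_0^2(x)+\int_0^t(c+h_1)e^{-\lambda}\,\vd r+2\sqrt{2}\int_0^t e^{-\lambda}\rho\,\vd b,
\]
and after exponentiating and taking expectations one bounds the stochastic-integral factor via Cauchy--Schwarz and the exponential supermartingale inequality, obtaining
\[
\mathbb{E}^x\exp\Bigl(16\int_0^t e^{-\lambda}\rho^2\,\vd r\Bigr)\le C(t,x)\,\Bigl[\mathbb{E}^x\exp\Bigl(16\int_0^t e^{-2\lambda}\rho^2\,\vd r\Bigr)\Bigr]^{1/2}\le C(t,x)\,\Bigl[\mathbb{E}^x\exp\Bigl(16\int_0^t e^{-\lambda}\rho^2\,\vd r\Bigr)\Bigr]^{1/2},
\]
which is \emph{self-referential} and closes to give $\mathbb{E}^x\exp(16\int e^{-\lambda}\rho^2)\le C(t,x)^2$. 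The $16$ is exactly the constant for which the doubled quadratic variation $2\cdot 8\,e^{-2\lambda}\rho^2$ matches the drift $16\,e^{-\lambda}\rho^2$ after using $e^{-2\lambda}\le e^{-\lambda}$. Gronwall plays no role here.

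Two smaller points. First, your application of Theorem~\ref{1t1} on $D_n$ with the deterministic $h(r)=\frac{(r-s)\wedge(t-s)}{t-s}$ is not legitimate as stated: that theorem requires $h(r)=1$ for all $r\ge t\wedge\tau_{D_n}$, which your $h$ violates on $\{\tau_n<t\}$. The paper instead first shows (using the exponential moment bound above together with \cite[Theorems~8.5,~9.1]{LXM}, which gives $\sup_{r\in[s,t]}\||\nabla^rP_{r,t}f|_r\|_\infty<\infty$) that $r\mapsto\langle\nabla^rP_{r,t}f(X_r),u_rQ_{s,r}a\rangle_r$ is a \emph{uniformly integrable} martingale on all of $[s,t]$; once that is known, the integration-by-parts computation \eqref{e:local-eq} can be rerun globally with the deterministic $h$, without any stopping. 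Second, the uniform gradient bound on $P_{r,t}f$ that you implicitly use when passing to the limit in the first equality is itself a nontrivial consequence of the exponential moment estimate, not something that follows from $f$ being constant off a compact set.
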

\begin{proof}
We again assume $s=0$.  By the It\^{o} formula (see \cite[Theorem 2]{Ku}),
$$d \rho^2_r(X_r)\leq 2\sqrt{2}\rho_r(X_r)d b_r+\l(c+h_1(r)+h_2(r)\rho^2_r(X_r)\r)d r$$
holds for some one-dimensional Brownian motion $b_t$. Let
$\lambda(r)=\int_0^r(h_2(s)+16)d s.$
Then, we have
\begin{align*}
d \l[e^{-\lambda(r)}\rho^2_r(X_r)\r] \leq&
e^{-\lambda(r)}\l[2\sqrt{2}\rho_r(X_r)d
b_r+\l(c+h_1(r)+h_2(r)\rho^2_r(X_r)\r)d
r\r]\\
&-e^{-\lambda(r)}(h_2(r)+16)\rho^2_r(X_r)d r\\
=&2\sqrt{2}e^{-\lambda(r)}\rho_r(X_r)d
b_r-16e^{-\lambda(r)}\rho^2_r(X_r)d r +(c+h_1(r))e^{-\lambda(r)}d r.
\end{align*}
Thus, letting $C(t,x)=e^{\rho_0^2(x)+ct+\int_0^th_1(s)d
s}$, we obtain
\begin{align*}
\mathbb{E}^x\exp{\l\{16\int_0^{t\wedge\zeta_n}\rho^2_r(X_r)e^{-\lambda(r)}d
r\r\}}  \leq
&\mathbb{E}^x\exp{\l\{2\sqrt{2}\int_0^{t\wedge\zeta_n}\rho_r(X_r)e^{-\lambda(r)}d
b_r\r\}}\cdot C(t,x)\\
\leq&\mathbb{E}^x\exp{\l\{16\int_0^{t\wedge\zeta_n}\rho^2_r(X_r)e^{-2\lambda(r)}d
r\r\}^{1/2}}\cdot C(t,x),
\end{align*}
where $\zeta_n$ is defined as in \eqref{zeta-n} with $s=0$.  From this, it is easy to deduce that
$$\mathbb{E}^x\exp{\l\{16\int_0^{t\wedge\zeta_n}\rho^2_r(X_r)e^{-\lambda(r)}d
r\r\}}\leq C(t,x)^2.$$
Now letting $n\rightarrow\infty$, we arrive at
$$\mathbb{E}^x\exp{\l\{16\int_0^{t}\rho^2_r(X_r)e^{-\lambda(r)}d
r\r\}}\leq C(t,x)^2.$$
Combining this with \eqref{add-eq1}, and  letting
$K(r,x):=h_3(r)-16e^{-\int_0^r(h_2(s)+16)d s}\rho^2_r,$
we deduce that
$\mathcal{R}_r^{Z}\geq K_r,\
 r\in [0,t],$
and
\begin{align}\label{e:contral-curvature}
&\sup_{x\in {\bf K}}\mathbb{E}^xe^{\int_0^tK^-(r,X_r)d r} \leq\sup_{x\in
 {\bf K}}\mathbb{E}^x\exp{\l\{16\int_0^te^{-\int_0^r(h_2(u)+16)d
u}\rho^2_r(X_r)d r\r\}}<\infty,\
\end{align}
where ${\bf K}\subset M $ is a compact subset. Then,
 following the proof of   \cite[Theorems 8.5 and 9.1]{LXM}, it is  easy to derive from \eqref{e:contral-curvature} that  $\sup_{r\in
[0,t]}\||\nabla^{r}P_{r,t}f|_r\|_{\infty}<\infty$. Therefore,  the first equality in \eqref{1-0} for $s=0$ follows from  \eqref{2.4}
  by taking $F(r,x)=P_{r,t}f(x)$, $r\in [0,t]$.

   Next, due to the first equality in \eqref{1-0} for $s=0$, we know that
  for any $a\in \mathbb{R}^d$ with $\|a\|=1$,
$$\mathbb{E}^x\sup_{r\in[0,t]}|\l<\nabla^rP_{r,t}f(X_r),
u_rQ_ra\r>_r|\leq \sup_{r\in[0,t]}\||\nabla^{r}P_{r,t}
f|_r\|_{\infty}\mathbb{E}^xe^{\int_0^tK^-(r,X_r)d r}<\infty,$$
which implies that
$$\l<\nabla^rP_{r,t}f(X_r),u_rQ_ra\r>_r,\ r\in[0,t]$$
 is a uniformly integrable martingale. Hence (\ref{D-formula}) holds for $t$ in
 place of $t\wedge \tau_D$ and any $h\in C^1([0,t])$ with $h(0)=0$ and $h(t)=1$. Therefore, the second equality in \eqref{1-0} holds for $s=0$.
\end{proof}
If there exists a non-negative $\phi\in C([0,\infty))$ and $h\in C([0,T_c))$ such that $\mathcal{R}_t^Z\geq -h(t)\phi(\rho_t)$, then by a similar argument as in the proof of \cite[Lemma 9]{Ku}, there exists a non-increasing function $F$ satisfying $\lim_{r\rightarrow 0}rF(r)<\infty$ such that
\begin{align}\label{compare-rho}
(L_t+\partial_t)\rho_t(x)\leq F(\rho_t(x))+h(t)\int_0^{\rho_t(x)}\phi(s)d s+|Z_t(o)|_t.
\end{align}
Hence,  the assumptions in Theorem
\ref{1t3} are ensured by each of the following conditions:
\begin{itemize}
   \item [(A1)] there\ exists a non-negative function $C\in C([0,T_c))$ such that $\mathcal{R}^Z_t\geq -C(t)$ for all $t\in [0,T_c);$
  \item [(A2)] there\ exist two non-negative functions $C_1, C_2
\in C([0,T_c)),$ such that  for all $t\in [0,T_c)$,
 $${\rm Ric}_t\geq -C_1(t)(1+\rho_t^2)\ \
{\rm and} \  \ \partial_t\rho_t+ \l<Z_t ,\nabla^{t}\rho_t\r>_t\leq
C_2(t)(1+\rho_t).$$
\end{itemize}
\begin{remark}\label{add-rem-1}
If (A2) holds, then
\begin{align*}
(L_t+\partial_t)\rho_t^2=& 2\rho_t(L_t+\partial_t)\rho_t+2
=  2\rho_t(\Delta_t+\partial_t+Z_t)\rho_t+2\\
\leq & 2\rho_t \Delta_t\rho_t +2C_2(t)(\rho_t+\rho_t^2)+2\\
\leq &2\rho_t\sqrt{(d-1)C_1(t)(1+\rho_t^2)}\coth\l(\sqrt{C_1(t)(1+\rho_t^2)/(d-1)}\rho_t\r)+2C_2(t)(\rho_t+\rho_t^2)+2.
\end{align*}
Combining this  with  the inequality $\coth(s)\leq 1+s^{-1}$, we know that under (A2), the conditions in Theorem \ref{1t3} are satisfied.
\end{remark}

\subsection{Gradient estimates}
In \cite{TW},  a local gradient estimate is given by using the local geometry of $M$. Inspired from  this,  local  gradient estimates of $P_{s,t}f$ are studied by using local version of derivative formula, and the corresponding result is presented as follows.
\begin{theorem}\label{c1}
Let $\mathcal{R}_s^Z\geq K_s$ for
some $K\in C([0,T_c)\times M)$.   Given $x\in M$, let
$$\kappa_s(x)=\sup_{r\in [s,s+1]}\l(\sup_{B_r(x,1)}K(r,\cdot)^-+|Z_r|_r(x)\r).$$
Then there exists a constant $c>0$ such that for any $f\in \mathcal{B}_b(M)$,
\begin{align}\label{local-g-est}
|\nabla^sP_{s,t}f|_s(x)\leq\frac{\|f\|_{\infty}\exp{[c(1+\kappa_s(x))]}}{\sqrt{(t-s)\wedge1}}.
\end{align}
\end{theorem}
\begin{proof}
Without loss of generality, we  consider $s=0$ for
simplicity. By the semigroup property and the contraction of
$P_{s,t}$, it suffices to prove \eqref{local-g-est} for $0<t\leq 1\wedge T_c$.   It is easy to see that if we choose
  some explicit process $h$
 in  Theorem \ref{1t1}, then the associated gradient
estimate of $P_{s,t}f$ can be achieved by only using local geometry of the manifold. To this end,  for $x\in M$, let  $D=\{(r,y)\in [0,t]\times
M:\rho_r(x,y)\leq 1\}$. It is easy to see that $D$ is closed and hence compact, since
$\rho_r(x,y)$ is continuous in $(r,x,y)$ (see \cite[Lemma 2.5]{Ku3}).
  Let
$\varphi(t, X_t)=\cos(\pi\rho_t(x,X_t)/2)$. Let $X_0=x$ and
$$T(r)=\l(\int_0^r\varphi^{-2}(t,X_t)d t\r) \mathbf{1}_{\{r\leq \tau_D\}}+\infty {\bf 1}_{\{r>\tau_D\}},$$
where  $\tau_D$ is the first hitting time of $(r,X_r)$ to
$\partial D$. Define
$$\tau(r)=\inf\{u\geq 0: T(u)\geq r\},\ \ r\geq 0.$$ Then $T\circ\tau(r)=r$ for $r\leq \tau_D$.  Moreover,
$$\tau'(r)=\frac{1}{T'\circ\tau(r)}=\varphi^2(\tau{(r)}, X_{\tau{(r)}}),\ \ r\leq \tau_D.$$
Since $\varphi\leq 1$, we have
$\tau(r)\leq r$.
Define $h(r)=\frac{1}{t}\int_0^{r\wedge \tau(t)}\varphi^{-2}(u,X_u)d
u.$ Then, $h$ meets the requirement of Theorem \ref{1t1} and
\begin{align}\label{eq1}
\int_0^{\tau(t)}h'(r)^2d
r&=\frac{1}{t^2}\int_0^{\tau(t)}\varphi^{-4}(r,X_r)d
r=\frac{1}{t^2}\int_0^{\tau(t)}\varphi^{-2}(r,X_r)d
T(r)=\frac{1}{t^2}\int_0^t\varphi^{-2}(\tau(r),X_{\tau(r)})d r.
\end{align}
Moreover, let $v\in T_xM$ and $|v|_0=1$. By the definition of $Q_r$
and $\mathcal{R}_r^{Z}\geq -\kappa_0$  on
$D$, we have
$$|u_rQ_ru_0^{-1}v|_r\leq |v|_0e^{\kappa_0},\ \ r\leq \tau(t),\ \ 0< t\leq 1.$$
Combining this  with Theorem
\ref{1t1}, we have
\begin{align*}
  \l|\l<\nabla^0P_{0,t}f(x), v\r>_0\r|&\leq \frac{1}{\sqrt{2}} \|f\|_{\infty}e^{\kappa_0(x)}\l(\mathbb{E}^x\int_0^{\tau(t)}h'(r)^2d r\r)^{1/2}\\
&=\frac{1}{\sqrt{2}} \|f\|_{\infty}e^{\kappa_0(x)}\frac{1}{t^2}\int_0^t\varphi^{-2}(\tau(r),X_{\tau(r)})d r.
\end{align*}
 Thus, it suffices for us to estimate  the last term above. Before  this, we prove that $(\tau(r),X_{\tau(r)})$ is non-explosive on
$D$, i.e. the life time $\tau:=\inf\{r\geq 0, (\tau(r),X_{\tau(r)})\in \partial D\}$ satisfies $\tau=\infty$, a.e.

  For $n\geq 1$, let $\tau_n=\inf\{r: \varphi(\tau(r),X_{\tau(r)})\leq 1/n\}$.
 Note that $X_{\tau(r)}$ is generated by $\varphi^2L_{\tau(r)}$, then by \eqref{compare-rho}, there exists some constant $c>0$ such that
\begin{align*}
\varphi^2(L_{\tau(r)}+\partial_1)\varphi^{-1}&=-(L_{\tau(r)}+\partial_1)\varphi+2\varphi^{-1}|\nabla^{\tau(r)}\varphi|_{\tau(r)}^2\leq c\varphi^{-1},
\end{align*}
where $\partial_1$ denotes the derivative with respect to the first variable.
Hence, we have
\begin{align}\label{e:equ-3}
\mathbb{E}^x\varphi^{-1}(\tau(t\wedge \tau_n),X_{\tau(t\wedge \tau_n)})\leq \varphi^{-1}(0,x)e^{ct}=e^{ct},\qquad t\geq 0,\ n\geq 1.\end{align}
On the other hand, $\mathbb{E}^x\varphi^{-1}(\tau(t\wedge \tau_n),X_{\tau(t\wedge \tau_n)})\geq n\mathbb{P}(\tau_n<t)$. By this and \eqref{e:equ-3},  it is easy to see that
$$\mathbb{P}^x(\tau_n<t)\leq n^{-1}e^{ct}.$$
Letting $n\rightarrow\infty$, we have $\mathbb{P}^x(\tau<t)=0,\ t\geq 0$, i.e. $\mathbb{P}^x(\tau=\infty)=1$.

Next,  by using the It\^{o} formula,
\begin{align}\label{eq2}
d \varphi^{-2}(\tau(r), X_{\tau(r)})\leq d M_r + \l[\varphi^2\l(L_{
{\tau(r)}} +\partial_1\r)\varphi^{-2}\r](\tau(r),
X_{\tau(r)})d r
\end{align}
holds for some local martingale $M_r$. According to \eqref{compare-rho} and the
definition of $\kappa$, there exists a constant $c_1>0$ such that for all $ r\in [0, t]$,
$$\sin(\pi \rho_r(x,\cdot)/2)\l(L_r+{\partial_r}\r)\rho_r(x,\cdot)\leq c_1(1+\kappa_0(x))$$
holds on $D$. Thus, there exists a constant $c_2>0$ such that for all $r\in [0,t]$,
\begin{align*}
\varphi^{2}\l(L_r+{\partial_
r}\r)\varphi^{-2}&=-2\varphi^{-1}\l(L_r+{\partial_
r}\r)\varphi+6\varphi^{-2}|\nabla^r\varphi|_r^2\leq c_2(1+\kappa_0(x))\varphi^{-2}
\end{align*} holds on $D$.
Combining this with (\ref{eq1}) and (\ref{eq2}), we obtain that for  $t\in (0,1]$,
\begin{align}\label{eq31}
\mathbb{E}^x\int_0^{\tau(t)}h'(r)^2d r&=
\frac{1}{t^2}\int_0^t\mathbb{E}^x \varphi^{-2}(\tau(r), X_{\tau(r)})d
r\notag\\
&\leq \frac{1}{t^2}\int_0^t e^{c_2(1+\kappa_0(x))r}d r\leq \frac{c_3}{t}e^{c_3(1+\kappa_0(x))}
\end{align}
for some constant $c_3>0$.  From this, it follows that
$$\l|\l<\nabla^0P_{0,t}f(x), v\r>_0\r|\leq \|f\|_{\infty}e^{\kappa_0(x)}\l(\mathbb{E}\int_0^{\tau(t)}h'(r)^2d r\r)^{1/2}\leq\frac{\|f\|_{\infty}c_4e^{c_4(1+\kappa_0(x))}}{\sqrt{t}}$$
holds for some constant $c_4>0$ and all $t\in (0,1]$. This completes
the proof.
 \end{proof}
\begin{remark}\label{gradient-ineq}
We would like to indicate that recently, the author with X. Chen and J. Mao give local gradient estimates for some important geometric quantities
under Ricci flow, mean curvature flow and Yamabe flow by using some probabilistic method, see \cite{CCM} for more details.
\end{remark}

Next,   we aim to provide various equivalent semigroup inequalities for the lower bound curvature condition, i.e.
\begin{align}\label{CV}
\mathcal{R}^Z_t\geq K_t,\ \ \mbox{for  some}\ K\in C([0,T_c)\times M).
\end{align}
By using   the derivative formula, we have
%In Section 4.1, we present some equivalent gradient inequities for \eqref{CV}. To derive more equivalent
%inequalities,  we introduce in Section 4.2 two crucial couplings of the $L_t$-diffusion process. Finally, in
%Section 4.3, we present more equivalent semigroup inequities  for \eqref{CV}.

\begin{theorem}\label{3t1}
Assume ${(A1)}$ or ${ (A2)}$ holds. Let $p\geq 1$
and $\tilde{p}=p\wedge 2$. Then for any $K\in C([0,T_c)\times M)$ and $t\in [0,T_c)$ such
that $K_t(x)^-/\rho^2_t(x)\rightarrow 0$ as
$\rho_t(x)\rightarrow \infty$, the following statements are
equivalent to each other.
\begin{enumerate}
  \item[$(i)$]  The curvature condition \eqref{CV} holds for the function $K$.
  \item[$(ii)$] For any $x\in M$,  $0\leq s\leq t<T_c$ and $f\in
 C_c^1(M)$,  $$|\nabla^sP_{s, t}f (x)|_s^p\leq
\mathbb{E}^{(s,x)}\l\{|\nabla^{t}f|_t^p(X_t)e^{-p\int_s^tK(r,X_{r})d
 r}\r\}.$$
  \item [$(iii)$] For any $0\leq s\leq t< T_c$, $x\in M$ and positive function $f\in C_c^1(M)$,
$$\frac{\tilde{p}[P_{s,t}f^2-(P_{s,t}f^{2/\tilde{p}})^{\tilde{p}}]}{4(\tilde{p}-1)}\leq \mathbb{E}^{(s,x)}\l\{|\nabla^{t}f|_t^2(X_t)\int_s^te^{-2\int_u^tK(r,X_{r})d r}d u\r\},$$
where when $p=1$, the inequality is understood as its limit as
$p\downarrow 1$:
\begin{align*}
P_{s,t}(f^2\log
f^2)(x)-(P_{s,t}f^2\log P_{s,t}f^2)(x)
\leq \  4
\mathbb{E}^{(s,x)}\l\{|\nabla^{t}f|^2_t(X_t)\int_s^te^{-2\int_u^tK(r,X_{r})d
r}d u\r\}.\end{align*}
  \item[$(iv)$] For any $0\leq s\leq t<T_c$, $x\in M$ and positive function $f\in C_c^1(M)$,
 \begin{align*}
 |\nabla^sP_{s,t}f|^2_s(x)
 \leq  \ \frac{[P_{s,t}f^{\tilde{p}}-(P_{s,t}f)^{\tilde{p}}](x)}{\tilde{p}(\tilde{p}-1)
 \int_s^t\l(\mathbb{E}^{(s,x)}\l\{(P_{u,t}f)^{2-\tilde{p}}(X_{u})e^{-2\int_s^uK(r, X_{r})d r}\r\}\r)^{-1}d
 u},\end{align*}
where when $p=1$, the inequality is understood as its limit as
$p\downarrow 1$:
\begin{align}\label{e-10}
|\nabla^sP_{s,t}f|^2_s(x)\leq \frac{[P_{s,t}(f\log f)-(P_{s,t}f)\log P_{s,t}f](x)}{\int_s^t\l(\mathbb{E}^{(s,x)}\l\{P_{u,t}f(X_{u})e^{-2\int_s^uK(r, X_{r})d r}\r\}\r)^{-1}d u}.\end{align}
\end{enumerate}
\end{theorem}
\begin{remark}\label{rem-add1}
 Consider a special case: $L=\Delta$ and $K=0$, i.e., $g_t$ is a super Ricci flow. Then Theorem \ref{3t1}(ii)-(iv) give equivalent characterizations for the super Ricci flow. It  is worth mentioning that very recently R. Haslhofer and A. Naber \cite{HN} use a sharp infinite dimensional gradient estimate to characterize solutions of the Ricci flow.
\end{remark}

 To prove this result, we first need to
characterize $\mathcal{R}^Z_t$ by
using the Taylor expansions first. When the metric is independent of $t$, the following results are essentially due to \cite{Bakry, Sturm}.
\begin{lemma}\label{1t2}
For $s\in [0,T_c)$ and $x\in M$, let $X\in T_x M$ with
$|X|_s=1$. Let $f\in C_0^{\infty}(M)$ such that $\nabla^{s}
f(x)=X$ and $\mathrm{Hess}^s_f(x)=0$,
and let $f_n=n+f$ for $n\geq 1$. Then,\\
$(i)$ for any $p>0$,
\begin{align*}
\mathcal{R}^Z_s(X,X)
&=\lim_{t\downarrow
s}\frac{P_{s,t}|\nabla^tf|^p_t(x)-|\nabla^{s}P_{s,t}f|^p_s(x)}{p(t-s)};%\label{1-2}
\end{align*}
 $(ii)$
 for any $p>1$,
 \begin{align}
\mathcal{R}^Z_s(X,X)
&=\lim_{n\rightarrow\infty}\lim_{t\downarrow s
 }\frac{1}{t-s}\l(\frac{p\{P_{s,t}f_n^2-(P_{s,t}f_n^{\frac{2}{p}})^p\}}{4(p-1)(t-s)}-|\nabla^{s}P_{s,t}f_n|_s^2\r)(x)\nonumber\\
 &=\lim_{n\rightarrow\infty}\lim_{t\downarrow s}\frac{1}{t-s}\l(P_{s,t}|\nabla^tf|^2_t-\frac{p\{P_{s,t}f_n^2-(P_{s,t}f_n^{\frac{2}{p}})^p\}}{4(p-1)(t-s)}\r)(x);\label{2-2}
\end{align}
$(iii)$ $\mathcal{R}^Z_s(X,X)$
is equal to each of the following limits:
\begin{align*}
&\lim_{n\rightarrow\infty}\lim_{t\downarrow
s}\frac{1}{(t-s)^2}\l\{(P_{s,t}f_n)\l[P_{s,t}(f_n\log
f_n)-(P_{s,t}f_n)\log
P_{s,t}f_n\r]-(t-s)|\nabla^{s}P_{s,t}f|_s^2\r\}(x);\\ %\label{4-2}
&\lim_{n\rightarrow\infty}\lim_{t\downarrow
s}\frac{1}{4(t-s)^2}\l\{4(t-s)P_{s,t}|\nabla^tf|^2_t+(P_{s,t}f_n^2)\log
P_{s,t}f_n^2-P_{s,t}{f_n^2\log f_n^2}\r\}(x).%\label{5-2}
\end{align*}
\end{lemma}
\begin{proof}
(a). Without loss of generality, we only prove for $s=0$.
%The proof is similar to the corresponding ones in \cite{Sturm} for constant metric case.
Since $\nabla^0f=X$ and ${\rm
Hess}_f^0(x)=0$, by the Bochner-Weitzenb\"{o}ck formula, we have
$$\Gamma_2^0(f,f)(x):=\frac{1}{2}L_0|\nabla^0f|^2_0(x)-\l<\nabla^0f, \nabla^0L_0f\r>_0(x)={\rm Ric}_0(X,X)-\l<\nabla_X^0{Z_0},X\r>_0.$$
Thus, the first assertion follows from
  the Taylor expansions at point $x$ (we drop $x$ below for simplicity):
\begin{align}\label{e-equ1}
P_{0,t}|\nabla^tf|_t^p
=|\nabla^0f|_0^p+\bigg(\frac{p}{2}|\nabla^0f|_0^{p-2}L_0|\nabla^0f|^2_0
-\frac{p}{2}|\nabla^0f|^{p-2}_0\mathcal
\partial_tg_t|_{t=0}(\nabla^0f, \nabla^0f)\bigg)\,t+{\rm o}(t),
\end{align}
and
\begin{align}\label{e-equ2}
|\nabla^0P_{0,t}f|^p_0=|\nabla^0f|_0^p+pt|\nabla^0f|^{p-2}_0\l<\nabla^0L_0f, \nabla^0f\r>_0+{\rm o}(t),
\end{align}
where in  the first equality we use the following equality,
$$\partial_t|\nabla^tf|^2_t=-\partial_tg_t(\nabla^tf,\nabla^tf).$$
 %These equalities futher imply
%\begin{align}
%\mathcal{R}_0^Z(X,X)
%&=\lim_{t\rightarrow
%0}\frac{P_{0,t}|\nabla^tf|^p_t(x)-|\nabla^0P_{0,t}f|^p_0(x)}{pt}.\label{1-2}
%\end{align}

(b). Let $f_n=n+f$, which is positive for large $n$. Then, for
small $t>0$ and large $n$,
\begin{align*}
P_{0,t}f_n^2-(P_{0,t}f_n^{2/p})^{p}%=t\l(L_0f_n^2-pf_n^{\frac{2(p-1)}{p}}L_0f_n^{2/p}\r)+\frac{t^2}{2}\bigg(L_0^2f_n^2-p(p-1)f_n^{\frac{2(p-2)}{p}}(L_0f_n^{2/p})^2\\
%&\quad-pf_n^{\frac{2(p-1)}{p}}L_0^2f_n^{2/p}
% -4\frac{(p-1)}{p}\partial_tg_t|_{t=0}(\nabla^0f,\nabla^0f)\bigg)+{\rm o}(t^2)\\
&=\frac{8(p-1)t^2}{p}\l<\nabla^0f,\nabla^0L_0f\r>_0-\frac{2(p-1)t^2}{p}\partial_tg_t|_{t=0}(\nabla^0f,\nabla^0f)\\
&\quad +
\frac{4(p-1)t}{p}|\nabla^0f|^2_0+\frac{4(p-1)t^2}{p}\Gamma^0_2(f,f)+t^2{\rm O}(n^{-1})+{\rm o}(t^2).
\end{align*}
 By this and  \eqref{e-equ2} for $p=2$, we prove the first equality  in \eqref{2-2}.
%\begin{align}\label{3e1}
%|\nabla^0P_{0,t}f_n|^2_0=|\nabla^0f|^2_0+2t\l<\nabla^0f,
%\nabla^0L_0f\r>_0+o(t),
%\end{align}
%we obtain the first equality in $(2)$.
 Similarly, the second equality follows from \eqref{e-equ1} for $p=2$.
%\begin{align}\label{3e2}
%P_{0,t}|\nabla^tf|_t^2=|\nabla^0f|^2_0+t\l(L_0|\nabla^0f|^2_0-\mathcal
%\partial_tg_t|_{t=0}(\nabla^0f,\nabla^0f)\r)+o(t).
%\end{align}

(c). The equalities in (iii) can be proved by combining
(\ref{e-equ1}) and (\ref{e-equ2}) for $p=2$ with the following two asymptotic formulae
respectively.
\begin{align*}
&(P_{0,t}f_n)\{P_{0,t}(f_n\log f_n)-(P_{0,t}f_n)\log P_{0,t}f_n\}\\
%=&(f_n+tO(1)+{\rm o}(t))\bigg\{t[L_0(f_n\log f_n)-(1+\log
%f_n)L_0f_n]\\
%&+\frac{t^2}{2}\l[L_0^2(f_n\log f_n)-(1+\log
%f_n)L_0^2f_n-{f_n^{-1}}(L_0f_n)^2-{f_n^{-1}}\partial_tg_t|_{t=0}(\nabla^0f,\nabla^0f)\r]+{\rm o}(t^2)\bigg\}\\
=&
t|\nabla^0f|^2_0+t^2\Gamma_2^0(f,f)+2t^2\l<\nabla^0f,\nabla^0L_0f\r>_0-\frac{1}{2}t^2\partial_tg_t|_{t=0}(\nabla^0f,\nabla^0f)+t^2{\rm O}(n^{-2})+{\rm o}(t^2);\\ \vspace{0.3cm}
&(P_{0,t}f_n^2)\log P_{0,t}f_n^2-P_{0,t}(f_n^2\log f_n^2)\\
=&-4t|\nabla^0f|^2_0-4t^2\l<\nabla^0L_0f,\nabla^0f\r>_0+2t^2\partial_tg_t|_{t=0}(\nabla^0f,\nabla^0f)-2t^2L_0|\nabla^0f|^2_0+{\rm o}(t^2)+t^2{\rm O}(n^{-1}).
\end{align*}
\end{proof}
\begin{proof}[Proof of Theorem \ref{3t1}]
According to the proof of Theorem \ref{1t3},
$\mathbb{E}^{(s,x)}\exp({p\int_s^t}K^-(s, X_s)d s)<\infty$ holds for any
$p>0, \ 0\leq s\leq t<T_c$ and $x\in M$.  So,  we obtain (i) of Lemma
\ref{1t2} by applying (ii) to $f\in C_0^{\infty}(M)$ such
that ${\rm Hess}^{s}_f(x)=0$ or applying (iii) to $n+f$
in place of $f$, or applying (iv) to $(f+n)^{2/p}$ when $p>1$ (resp.
$f+n$  when $p=1$) in place of $f$. Thus, it
suffices to show that (i) implies (ii)--(iv).

First,  if $\mathcal{R}^Z_t\geq
K_t,\ t\in [0,T_c)$, then by the first equality in (\ref{1-0})
and (\ref{damp}), we have
\begin{align*}
|\nabla^sP_{s,t}f|_s(x)&\leq \mathbb{E}^{(s,x)}\l\{|\nabla
^tf|_t(X_t)\exp{\l[-\int_s^tK(u, X_u)d
u\r]}\r\}\\
&\leq
\mathbb{E}^{(s,x)}\l\{|\nabla^{t}f|^p_t(X_t)\exp{\l[-p\int_s^tK(u,
X_u)d u\r]}\r\}^{1/p},
\end{align*}
thus, (ii) holds.

 To prove (iii) and (iv), let $p\in (1,2]$. %By an approximation
% argument, we assume that $f\in C^{\infty}(M)$ and is constant
 %outside a compact set.
 Without loss of generality, we only prove for $s=0$. By  Theorem
 \ref{0t1} and  (ii) with $p=1$,  for $0<u<\zeta_n$  ($\zeta_n$ is defined as in \eqref{zeta-n} with $s=0$),
\begin{align*}
d (P_{u,t}&f^{2/p})^p(X_u)\\
&=d M_u+(L_u+\partial_u)(P_{u,t}f^{2/p})^p(X_u)du\\
&=dM_u+p(p-1)(P_{u,t}f^{2/p})^{p-2}(X_u)|\nabla^{u}P_{u,t}f^{2/p}|^2_u(X_u)du\\
&\leq d M_u+p(p-1)(P_{u,t}f^{2/p})^{p-2}(X_u)\mathbb{E}^{(u,X_u)}\l\{\frac{2}{p}f^{\frac{2-p}{p}}(X_t)|\nabla^tf|_t(X_t){\rm e}^{-\int_u^tK(r,X_r)d r}\r\}^2d u\\
&\leq d M_u+\frac{4(p-1)}{p}(P_{u,t}f^{2/p})^{p-2}(P_{u,t}f^{2(2-p)/p})(X_u)\mathbb{E}^{(u,X_u)}\l(|\nabla^tf|_t^2(X_t)e^{-2\int_u^tK(r,X_r)dr}\r)du,
\end{align*}
 where $M_u$ is the local martingale part of $ (P_{u,t}f^{2/p})^p(X_u)$. Moreover,  since $2-p\in [0,1]$, by the Jensen inequality,
$$P_{u,t}f^{2(2-p)/p}\leq (P_{u,t}f^{2/p})^{2-p},$$
we  then arrive at
$$d(P_{u,t}f^{2/p})^p(X_u)\leq d M_u +\frac{4(p-1)}{p}\mathbb{E}^{(u,X_u)}\l(|\nabla^tf|_t^2(X_t)e^{-2\int_u^tK(r,X_r)dr}\r)du,  \ \ u<\zeta_n.$$
 By taking
integral over $[0,s\wedge \zeta_n]$, this implies
\begin{align*}
 \mathbb{E}^{x}(P_{s\wedge \zeta_n,t}f^{2/p})^p(X_{s\wedge \zeta_n})-(P_{0,t}f^{2/p})^p(x)\leq \int_0^{s\wedge\zeta_n}\frac{4(p-1)}{p}\mathbb{E}^x\l[|\nabla^t f|_t^2(X_t)\exp{\l(-2\int_u^t K(r,X_r) dr\r)}\r]d u.
\end{align*}
Letting $n\rightarrow \infty$, we obtain
\begin{align*}
 \frac{d}{d u}P_{0,u}(P_{u,t}f^{2/p})^p(x)\leq \frac{4(p-1)}{p}\mathbb{E}^x\l\{|\nabla^tf|_t^2(X_t)e^{-2\int_u^tK(r,X_r)d r}\r\}, \ \ \ u\in [0,t].
\end{align*}
This implies (iii) for $s=0$ by taking integral over $[0,t]$.

 Similarly,
$$d(P_{u,t}f(X_u))^p=d \tilde{M}_u+p(p-1)(P_{u,t}f)^{p-1}(X_u)|\nabla^u P_{u,t}f|^2_u(X_u)d u, \ \ 0<u<\zeta_n,$$
where $\tilde{M}_u$ is the local martingale part of $(P_{u,t}f(X_u))^p$,
which, together with (ii),  implies
\begin{align*}
\frac{d }{d
u}P_{0,u}(P_{u,t}f)^p&=p(p-1)P_{0,u}\{(P_{u,t}f)^{p-2}|\nabla^{u}P_{u,t}f|^2_u\}.
\end{align*}
Thus,
\begin{align*}
\frac{d }{d
u}P_{0,u}(P_{u,t}f)^p&\geq \frac{p(p-1)\l\{\mathbb{E}^x\l[|\nabla^{u}
P_{u,t}f|_u(X_u)e^{-\int_0^uK(r, X_r)d
r}\r]\r\}^2}{\mathbb{E}^x\l\{(P_{u,t}f)^{2-p}(X_u)e^{-2\int_0^uK(r,X_r)d
r}\r\}}\\
&\geq
\frac{p(p-1)|\nabla^0P_{0,t}f|_0^2}{\mathbb{E}^x\l\{(P_{u,t}f)^{2-p}(X_u)e^{-2\int_0^uK(r,X_r)d
r}\r\}}.
\end{align*}
Integrating over $[0,t]$, we prove (iv) for $s=0$.
\end{proof}

\subsection{Dimension-free Harnack inequalities}
The study of dimension-free Harnack inequalities  was initiated in \cite{W97}, which is applied to characterize some important properties of the underlying processes, see \cite{Wbook2}. Note that when the metric is fixed,
the following equivalence of (i) and (iv) are essentially due to \cite{WEq}.  In what follows, we simply write $p_{s,t}(x,y)=p(s,x;t,y)$.
\begin{theorem}\label{Har}
Let $p\in (1,\infty)$ and $K\in C([0,T_c))$. Then the following assertions
are equivalent to each other.
\begin{enumerate}
  \item [$(i)$] The curvature condition \eqref{CV}  holds for  the function $K$.
  \item [$(ii)$] For any $f\in \mathcal{B}_b^+(M)$ and $0\leq s\leq t<T_c$,
  $$(P_{s,t}f)^p(x)\leq P_{s,t}f^p(y)\exp{\l\{\frac{p}{4(p-1)}{\l[\int_s^te^{2\int_s^rK(u)d u}d
r\r]}^{-1}\rho_s^2(x,y)\r\}}.$$
 %  where $C(s,t,K)={\l[4\int_s^te^{2\int_s^rK(u)d u}d
%r\r]}^{-1}$. And it keeps the same meaning in $(4),(5),(6)$.
  \item [$(iii)$]For any $f\in \mathcal{B}_b^+(M)$ with $f\geq 1$ and $0\leq s\leq t<T_c$,
  $$P_{s,t}\log f(x)\leq \log P_{s,t}f(y)+{\l[4\int_s^te^{2\int_s^rK(u)d u}d
r\r]}^{-1}\rho_s^2(x,y).$$
  \item [$(iv)$] For any $0\leq s\leq t<T_c$ and $x,y \in M$,
 \begin{align*}
 &\int_Mp_{s,t}(x,y)\l(\frac{p_{s,t}(x,y)}{p_{s,t}(y,z)}\r)^{\frac{1}{p-1}}\mu_{t}(d z)
 \leq \exp{\l\{\frac{p}{4(p-1)^2}{\l[\int_s^te^{2\int_s^rK(u)d u}d
r\r]}^{-1}\rho_s^2(x,y)\r\}}.\end{align*}
 \item [$(v)$] For any $0\leq s\leq t< T_c$ and $x,y\in M$,
 $$\int_M p_{s,t}(x,y)\log
 \frac{p_{s,t}(x,y)}{p_{s,t}(y,z)}\mu_{t}(d z)\leq
 {\l[4\int_s^te^{2\int_s^rK(u)d u}d
r\r]}^{-1}\rho_s^2(x,y).$$
\end{enumerate}
\end{theorem}
\begin{proof}
By \cite[Proposition 2.4]{W10},  (ii) and (iii) are equivalent to  (iv) and (v) respectively. Moreover, according to \cite[Corollary 1.4.3]{Wbook2}, we see that
(ii) implies (iii). Thus, it is sufficient to prove
that  ``(i)$\Rightarrow$(ii)" and ``(iii)$\Rightarrow$(i)".

(a)\ \emph{(i)\  implies\  (ii)}. We consider the case for
$s=0$. By approximation and the monotone class theorem, we may
assume that $f\in C^2(M), \inf f > 0$ and $f$ is constant outside
a compact set. Given $x \neq y$ and $t>0$, let $\gamma:
[0,t]\rightarrow M$ be the constant speed $g_0$-geodesic from $x$ to $y$ with
length $\rho_0(x,y)$. Let $\nu_s=\frac{d \gamma_s}{d s}$. Then
we have $|\nu_s|_0=\rho_0(x,y)/t$. Let
$$h(s)=\frac{t\int_0^se^{2\int_0^rK(u)d u}d r}{\int_0^te^{2\int_0^rK(u)d u}d r}.$$
Then $h(0)=0$ and  $h(t)=t$. Let $y_s=\gamma_{h(s)}$ and
$$\varphi(s)=\log P_{0,s}(P_{s,t}f)^p(y_s), \ s\in [0,t].$$
To get $\varphi'(s)$, we first see that
\begin{align*}
  d(P_{s,t}f(X_s))^p&=d M_s+(L_s+\partial_s)(P_{s,t}f)^p(X_s)d s\\
&=d M_s+ p(p-1)(P_{s,t}f)^{p-2}(X_s)|\nabla^sP_{s,t}f|_s^2(X_s)d s, \  \  \  s<\zeta_n,
\end{align*}
where $M_s$ is the local  martingale part of  $(P_{s,t}f(X_s))^p$, which implies that
$$\mathbb{E}^{x}(P_{s\wedge\zeta_n,t}f(X_{s\wedge\zeta_n}))^p-(P_{0,t}f)^p(x)=p(p-1)\mathbb{E}^x\int_0^{s\wedge \zeta_n}(P_{u,t}f)^{p-2}(X_u)|\nabla^uP_{u,t}f|_u^2(X_u)d u.$$
Moreover, due to Theorem \ref{3t1}(ii), it holds $|\nabla^uP_{u,t}f|_u\leq e^{-\int_u^tK(r)d
r}P_{u,t}|\nabla^{t} f|_t$. From this  and   $\inf f>0$,  we deduce   by  letting $n\rightarrow \infty$ that
\begin{equation}\label{Ito-eq}
\mathbb{E}^x(P_{s,t}f(X_{s}))^p-(P_{0,t}f)^p(x)=p(p-1)\int_0^{s}\mathbb{E}^x\l[(P_{u,t}f)^{p-2}(X_u)|\nabla^uP_{u,t}f|_u^2(X_u)\r]d u.
\end{equation}
By this and  the Kolmogorov equations, we obtain
\begin{align*}
\frac{d \varphi(s)}{d
s}=&\frac{1}{P_{0,s}(P_{s,t}f)^p}\Big\{p(p-1)P_{0,s}(P_{s,t}f)^p|\nabla^s\log
P_{s,t}f|^2_s+h'(s)\l<\nabla^0P_{0,s}(P_{s,t}f)^{p},\nu_s\r>_0\Big\}\\
 %&\frac{p}{P_{0,s}(P_{s,t}f)^p}\Big\{p(p-1)P_{0,s}(P_{s,t}f)^{p-2}|\nabla^sP_{s,t}f|^2_s-\frac{\rho_0(x,y)}{t} e^{-\int_0^sK(u)d u}
%h'(s)(P_{s,t}f)^{p-1}|\nabla^s P_{s,t}f|_{s}\Big\}\\
\geq&\frac{p}{P_{0,s}(P_{s,t}f)^p}P_{0,s}\Big\{(P_{s,t}f)^p\Big((p-1)|\nabla^s\log
P_{s,t}f|^2_s-\frac{\rho_0(x,y)}{t}h'(s)e^{-\int_0^s K(u)d u}|\nabla^s\log
P_{s,t}f|_s\Big) \Big \} \\
\geq& \frac{-p\rho_0(x,y)^2 h'(s)^2e^{-2\int_0^sK(u)d u}}{4(p-1)t^2},
\end{align*}
for  $s\in [0,t]$. Since $h'(s)=\frac{te^{2\int_0^sK(u)d
u}}{\int_0^te^{2\int_0^rK(u)d u}d r} $, we arrive at
$$\frac{d \varphi(s)}{d
s}\geq \frac{-p\rho_0(x,y)^2e^{\int_0^s2K(u)d u}
}{4(p-1)(\int_0^te^{2\int_0^rK(u)d u}d r)^2},\qquad s\in [0,t].$$ By
integrating over $s$ from 0 and $t$, we complete the proof.

(b)  \emph{ (iii)\ implies \ (i)}. Suppose that $s=0$. Let $x\in M$ and $X\in T_xM$ be
fixed. For any $n\geq 1$, we may take $f\in C^{\infty}(M)$ such that
$f\geq n$, $f$ is constant outside a compact set, and
$$\nabla^0f(x)=X, \ \ {\rm Hess}_f^0(x)=0.$$
Taking $\gamma_t=\exp_x\l[-2t\nabla^0\log f(x)\r]$, we have
$\rho_0(x,\gamma_t)=2t|\nabla^0\log f|_0(x)$ for $t\in
[0,t_0]$, where $t_0$ is a positive constant such that $\rho_0(x, \gamma_t)<r$ and
$r>0$. By using (iv) with $y=\gamma_t$, we obtain
\begin{align}\label{log1}P_{0,t}(\log f)(x)\leq \log
P_{0,t}f(\gamma_t)+\frac{t^2|\nabla^0\log
f|^2_0(x)}{\int_0^te^{2\int_0^rK(u)d u}d r},\qquad \ t\in
[0,t_0].\end{align}
Since $L_0f\in C^2_0(M)$ and ${\rm Hess}_f^0(x)=0$ implies
$\nabla^0|\nabla^0 f|_0^2(x)=0$ at point $x$, % we have
%\begin{align*}
%\frac{d }{d t}\big|_{t=0}P_{0,t}\log f&=L_0\log
%f=\frac{L_0f}{f}-\frac{1}{f^2}|\nabla^0f|^2_0;\\
%\frac{d ^2}{d t^2}\big|_{t=0}P_{0,t}\log f&=\frac{d }{d
%t}(P_{0,t}L_t\log f)|_{t=0}\\
%&=\frac{L^2(0)f}{f}-\frac{(L_0f)^2}{f^2}-\frac{2}{f^2}\l<\nabla^0L_0f,\nabla^0f\r>_0
%-\frac{L_0|\nabla^0f|^2_0}{f^2}\\
%&\quad
%+\frac{4|\nabla^0f|^2_0L_0f}{f^3}-\frac{6|\nabla^0f|_0^4}{f^4}+\frac{\partial_tg_t|_{t=0}(\nabla^0
%f,\nabla^0 f)}{f^2}+\frac{1}{f}\frac{d L_tf}{d
%t}\bigg|_{t=0}\\
%&:=A.
%\end{align*}
  by Taylor's expansion, we have
\begin{align}\label{log2}
P_{0,t}(\log f)(x)=\log f(x)+t(f^{-1}L_0f-|\nabla^0\log
f|^2_0)(x)+\frac{t^2}{2}A+{\rm o}(t^2)
\end{align} for small $t>0$, where
\begin{align*}
A:=&\frac{L_0^2f}{f}-\frac{(L_0f)^2}{f^2}-\frac{2}{f^2}\l<\nabla^0L_0f,\nabla^0f\r>_0
-\frac{L_0|\nabla^0f|^2_0}{f^2}
+\frac{4|\nabla^0f|^2_0L_0f}{f^3}\\
&-\frac{6|\nabla^0f|_0^4}{f^4}+\frac{\partial_tg_t|_{t=0}(\nabla^0
f,\nabla^0 f)}{f^2}+\frac{1}{f}\frac{d L_tf}{d
t}\bigg|_{t=0}.
\end{align*}
 On the other hand, let
$N_t=P^0_{x,\gamma_t}\nabla^0\log f(x)$, where $P^0_{x,\gamma_t}$ is
the $g_0$-parallel displacement along the $g_0$-geodesic $\gamma: t\rightarrow
\gamma_t$. We have $\dot{\gamma}_t=-2N_t$ and
$\nabla^0_{\dot{\gamma}_t}N_t=0.$
 Thus, note that ${\rm Hess}^0_f(x)=0$,  the Taylor expansion of $\log P_{0,t}f(\gamma_t)$ at $x$ is the following
$$\log P_{0,t}f(\gamma_t)=\log f(x)+t(f^{-1}L_0f-2|\nabla^0 \log f|^2_0)(x)+\frac{t^2}{2}B+{\rm o}(t^2),$$
where
\begin{align*}
B:=&\frac{L_0^2
f}{f}-\frac{(L_0f)^2}{f^2}+\frac{4 L_0f|\nabla^0f|_0^2}{f^3}-\frac{4\l<\nabla^0L_0f,\nabla^0
f\r>_0}{f^2}-4\frac{|\nabla^0f|_0^4}{f^4}+\frac{1}{f}\frac{d L_tf}{d t}\bigg|_{t=0}.
\end{align*}
Combining this with (\ref{log1}) and (\ref{log2}), we arrive at
\begin{align*}
&\frac{1}{t}\l(1-\frac{t}{\int_0^te^{2\int_0^rK(u)d u}d
r}\r)|\nabla^0\log f|^2_0(x)\\
&\leq \frac{1}{2}\bigg(\frac{L_0|\nabla^0
f|_0^2-2\l<\nabla^0L_0f,\nabla^0
f\r>_0}{f^2}+\frac{2|\nabla^0f|_0^4}{f^4}+\frac{1}{f^2}\partial_tg_t|_{t=0}(\nabla^0f,\nabla^0f)\bigg)(x)+{\rm o}(1).
\end{align*}
Letting $t\rightarrow 0$, we obtain
\begin{align*}%\label{gamma2}
\frac{1}{2}L_0|\nabla^0f|_0^2(x)-\l<\nabla^0L_0f,\nabla
f\r>_0(x)\geq
K(0)|\nabla^0f|^2_0(x)+\frac{1}{2}\partial_tg_t|_{t=0}(\nabla^0f,\nabla^0f)(x)-\frac{|\nabla^0
f|_0^4}{f^2}(x).
\end{align*}
By the Bochner-Weitzenb\"{o}ck formula,
it follows that for $ n\geq 1$,
$${\rm Ric}_0(X,X)-\l<\nabla_X^0{Z_0},X\r>_0\geq K(0)|X|^2_0+\frac{1}{2}\partial_tg_t|_{t=0}(X,X)-\frac{|X|^4_0}{n}.$$
This implies (i) for $s=0$ by letting $n\rightarrow \infty$.
\end{proof}

\begin{remark}\label{rem1}
 Let $p^{s,t}_{x,y}(z)=\frac{p_{s,t}(x,z)}{p_{s,t}(y,z)}$ for $x,y,z\in M$ and $0\leq s\leq t<T_c$.
According to \cite[Proposition\ 2.4]{W10}, we have the following statements, which are equivalent to  Theorem \ref{Har} (ii)(iii) respectively.
\begin{enumerate}
 \item [(ii')] For any $0\leq s\leq t<T_c$,  $p^{s,t}_{x,y}$ satisfies
 \begin{align*}%\label{s5-1}
 P_{s,t}((p^{s,t}_{x,y})^{1/(\alpha-1)})(x)\leq \l\{\frac{p}{p-1}{\l[4\int_s^te^{2\int_s^rK(u)d u}d
r\r]}^{-1}\rho_s^2(x,y)\r\}^{1/(\alpha-1)},\ \
 x,y\in M.
 \end{align*}
 \item [(iii')]  For any $0\leq s\leq t<T_c$,  $p^{s,t}_{x,y}$ satisfies
 $$P_{s,t}\{\log p^{s,t}_{x,y}\}(x)\leq {\l[4\int_s^te^{2\int_s^rK(u)d u}d
r\r]}^{-1}\rho_s^2(x,y),\ \ x,y\in M.$$
 \end{enumerate}
\end{remark}

\subsection{Other functional inequalities}

 In \cite{BBG} functional inequalities of the following type are
shown to be useful on manifolds with a fixed metric. We now extend this type of results to our case.

\begin{theorem}\label{other-ineq}
Let $K\in C([0,T_c))$. The following assertions
are equivalent to each other.
\begin{enumerate}
  \item [$(i)$] The curvature condition \eqref{CV}  holds for the function $K$.

\item [$(ii)$] For any $0\leq s\leq r\leq t<T_c$ and $1<q_1\leq q_2$ such
that
\begin{align}\label{3e11}
\frac{q_2-1}{q_1-1}=\frac{\int_s^te^{2\int_s^uK(\tau)d \tau}d
u}{\int_s^re^{2\int_s^uK(\tau)d \tau}d u},
\end{align}
it holds
$$\{P_{s,r}(P_{r,t}f)^{q_2}\}^{\frac{1}{q_2}}\leq (P_{s,t}f^{q_1})^{\frac{1}{q_1}}, $$
for all positive function $f\in\mathcal{B}_b(M).$
\item [$(iii)$]  For any $0\leq s\leq r\leq t<T_c$ and $0<q_2\leq q_1$ or $q_2\leq
q_1<0$ such that $(\ref{3e11})$ is satisfied,
$$(P_{s,t}f^{q_1})^{\frac{1}{q_1}}\leq \{P_{s,r}(P_{r,t}f)^{q_2}\}^{\frac{1}{q_2}}$$
for all positive function $f\in\mathcal{B}_b(M).$
%\item [$(9)$] For any $0\leq s\leq t<T_c$ and $f\in C^1(M)$ such that $f$ is constant outside a compact set,
%$$|\nabla^sP_{s,t}f|^p_s\leq e^{-p\int_s^tK(u)d u}\cdot P_{s,t}|\nabla^{t}f|^p_t.$$
%\item [$(10)$]
%For any $0\leq s\leq t<T_c$ and positive $f\in C^1(M)$ such that $f$ is constant outside a compact set,
%$$\frac{(p\wedge 2)\{P_{s,t}f^2-(P_{s,t}f^{2/(p\wedge 2)})^{p\wedge 2}\}}{4(p\wedge 2-1)}\leq \int_s^te^{-2\int_u^tK(r)d r}d u\cdot P_{s,t}|\nabla^{t}f|^2_t.$$
%When $p=1$, the inequality reduces to the log-Sobolev inequality
%$$P_{s,t}(f^2\log f^2)-(P_{s,t}f^2)\log P_{s,t}f^2\leq 4\int_s^te^{-2\int_u^tK(r)d r}d u \cdot P_{s,t}|\nabla^{t}f|^2_t.$$
\end{enumerate}

\end{theorem}

\begin{proof}
From Theorem \ref{3t1}, we know that (i) is equivalent to  Theorem \ref{3t1} (iii) with $p=1$ for $K\in C([0,T_c))$. Thus,  it suffices for us to show that each of  (ii) and (iii) is equivalent to Theorem \ref{3t1} (iii) with $p=1$ for $K\in C([0,T_c))$.

(a) \emph{Theorem \ref{3t1} (iii) with $p=1$ for $K\in C([0,T_c))$  implies\ (ii)\ and \ (iii).}
We again prove this assertion for $s=0$.
 By an approximation argument, it suffices to prove for $f\in C^{\infty}(M)$ such that $\inf f>0$
and $f$ is constant outside a compact set.  In this case, given $t\in (0,T_c)$, let
$$q(s)=1+\frac{(q_1-1)\int_0^te^{2\int_0^rK(u)d u}d r}{\int_0^se^{2\int_0^rK(u)d u}d r}\  \ \mbox{and} \ \ \psi(s)=\{P_{0,s}(P_{s,t}f)^{q(s)}\}^{\frac{1}{q(s)}},$$
for all $s\in (0,t]$.
Then
$$\int_0^se^{-2\int_r^sK(u)d u}d r+\frac{q(s)-1}{q'(s)}=0,$$
which together with \eqref{Ito-eq} implies
\begin{align*}
\l(\frac{\psi'\psi^{q-1}q^2}{q'}\r)(s)%P_{0,s}(P_{s,t}f)^{q(s)}\log(P_{s,t}f)^{q(s)}-\psi(s)^{q(s)}\log\psi(s)\\
%&+\frac{q(s)}{q'(s)}\l[P_{0,s}L_s(P_{s,t}f)^{q(s)}-q(s)P_{0,s}(P_{s,t}f)^{q(s)}L_sP_{s,t}f\r]\\
=&P_{0,s}(P_{s,t}f)^{q(s)}\log(P_{s,t}f)^{q(s)}-P_{0,s}(P_{s,t}f)^{q(s)}\log P_{0,s}(P_{s,t}f)^{q(s)}\\
&+\frac{q(s)^2(q(s)-1)}{q'(s)}P_{0,s}(P_{s,t}f)^{q(s)-2}|\nabla^sP_{s,t}f|^2_s.
\end{align*}
Due to Theorem \ref{3t1} (iii) with $p=1$ for $K\in C([0,T_c))$, we further have
\begin{align*}
\l(\frac{\psi'\psi^{q-1}q^2}{q'}\r)(s)\leq q(s)^2\l(\int_0^se^{-2\int_u^tK(r)d r}d
u+\frac{q(s)-1}{q'(s)}\r)P_{0,s}(P_{s,t}f)^{q(s)-2}|\nabla^sP_{s,t}f|^2_s=0.
\end{align*}
Therefore, in case (ii) one has $q'(s)<0 $ so that $\psi'(s)\geq 0$,
while in case (iii) one has $q'(s)>0$ so that $\psi'(s)\leq 0$. Hence,
the inequalities in (ii) and (iii) hold.

(b) \emph{ (ii)\ or\ (iii)\ implies\ Theorem \ref{3t1} (iii) with $p=1$ for $K\in C([0,T_c))$.}  We only
prove that (ii) implies (iii), since ``(ii) implies (iii)" can be shown
in a similar way. We also only consider $s=0$. Let $q_1=2$ and $q_2=2(1+\varepsilon)$ for small
$\varepsilon >0$. According to ($\ref{3e11}$), we take $r(\varepsilon)$ such that
\begin{align*}
\frac{1}{1+2\varepsilon}=\frac{\int_0^{r(\varepsilon)}e^{2\int_0^uK(\tau)d \tau}d
u}{\int_0^te^{2\int_0^uK(\tau)d \tau}d
u}=1-\frac{\int_{r(\varepsilon)}^te^{2\int_0^uK(\tau)d \tau}d
u}{\int_0^te^{2\int_0^uK(\tau)d \tau}d u}.
\end{align*}
Then,
$$\frac{2\varepsilon}{1+2\varepsilon}\int_0^te^{2\int_0^uK(\tau)d \tau}d u=\int_{r(\varepsilon)}^te^{2\int_0^uK(\tau)d \tau}d u \sim ({t-r(\varepsilon)})e^{2\int_0^tK(\tau)d \tau},$$
i.e.
$${(t-r(\varepsilon))}\sim 2\varepsilon \int_0^te^{-2\int_u^tK(\tau)d \tau}d u.$$
So, we obtain from
(ii) that
\begin{align*}
0&\geq \lim_{\varepsilon \rightarrow
0}\frac{1}{\varepsilon}\{[P_{0,r(\varepsilon)}(P_{r(\varepsilon),t})^{2(1+\varepsilon)}]^{\frac{1}{(1+\varepsilon)}}-(P_{0,t}f^2)\}\\
&= P_{0,t}f^2\log f^2-(P_{0,t}f^2)\log
P_{0,t}f^2-4\int_0^te^{-2\int_u^tK(r)d r}d u\cdot
P_{0,t}|\nabla^tf|^2_t.
\end{align*}
Therefore, Theorem \ref{3t1} (iii) with $p=1$  holds for $K\in C([0,T_c))$.
\end{proof}
\medskip

\section{Coupling for  $L_t$-diffusion processes and its applications}
\subsection{Coupling processes}
 We aim to construct coupling processes for $L_t$-diffusion processes by parallel translation and mirror reflection   in this subsection. %Next, we aim
%to present equivalent Harnack inequalities and transportation-cost inequalities for the curvature
%$\mathcal{R}_t^Z$ low bound. To
%this end,
%let us first introduce two crucial couplings for the
%diffusion process generated by $L_t$, namely, the couplings by
%parallel and  reflecting  displacement.

Let us introduce some basic notions first.
 Recall that ${\rm Cut}_t(x)$ is the set of the $g_t$-cut-locus of $x$
on $M$. Then, for each $t\in [0,T_c)$, the $g_t$-cut-locus ${\rm Cut}_t$ and the
space time cut-locus ${\rm Cut_{ST}}$ are defined by
\begin{align*}
{\rm Cut}_t&=\{(x,y)\in M\times M\ |\  y\in {\rm Cut}_{t}(x)\};\\
{\rm Cut_{ST}}&=\{(t,x,y)\in [0,T_c)\times M\times M\ |\  (x,y)\in {\rm
Cut}_t\}.
\end{align*}
Set $D(M)=\{(x,x)|x\in M\}$.
For any $(x,y)\notin {\rm Cut}_t$ with $x\neq y$, let $\{J_i^t\}_{i=1}^{d-1}$
be Jacobi fields along the minimal geodesic $\gamma$ from $x$ to $y$
with respect to the metric $g_t$, such that  $\{J_i^t,
\dot{\gamma}: 1\leq i\leq d-1\}$ is an orthonormal basis at $x$ and $y$. Let
\begin{align*}
  I_Z(t,x,y)=&\sum_{i=1}^{d-1}\int_{\gamma}\l(\l<\nabla^{t}_{\dot{\gamma}}J_i^t, \nabla^{t}_{\dot{\gamma}}J_i^t\r>_t-\l<R_t(J_i^t,\dot{\gamma})\dot{\gamma}, J_i^t\r>_t+\frac{1}{2}\partial_tg_t(\dot{\gamma},\dot{\gamma})\r)(\gamma(s))d s\\
  &+Z_t \rho_t(y, \cdot)(x)+Z_t \rho_t(x, \cdot)(y),
\end{align*}
 where $R_t$ is the curvature tensor with respect to the metric $g_t$.
Moreover, let $P_{x,y}^t: T_xM\rightarrow T_yM$ be the $g_t$-parallel
 translation along the geodesic $\gamma$. Define  the $g_t$-mirror reflection by
$$M^t_{x,y}: T_xM\rightarrow T_yM;\ v\mapsto P_{x,y}^tv-2\l<v,\dot{\gamma}\r>_t(x)\dot{\gamma}(y). $$
 It is well known that $P_{x,y}^t$ and $M_{x,y}^t$
are smooth outside ${\rm Cut}_t$ and $D(M)$. For convenience,
we set $P_{x,x}^t$ and $M_{x,x}^t$ be the identity for $x\in
M$. Our main result in this subsection is the following.
\begin{theorem}\label{3t2}
Let $x\neq y$ and $0<T<T_c$ be fixed. Let $U:[0,T]\times M\times
M\rightarrow TM$ be $C^1$-smooth in $({\rm
Cut_{ST}}\cup[0,T]\times D(M))^{c}$ such that $U(t,x_1,x_2)\in T_{x_2}M$ for $(t,x_1,x_2)\in [0,T]\times M\times
M$.
\begin{enumerate}
  \item[$(i)$] There exist two $\mathbb{R}^d$-valued Brownian motions $B_t$ and $\tilde{B}_t$
 on a complete filtered probability space $(\Omega, \{\mathscr{F}_t\}_{t\geq 0},
 \mathbb{P})$ such that
 $${\bf 1}_{\{(X_t, \tilde{X}_t)\notin{\rm Cut}_t\}}d \tilde{B}_t={\bf 1}_{\{(X_t, \tilde{X}_t)\notin{\rm Cut}_t\}}\tilde{u}_t^{-1}P^t_{X_t,\tilde{X}_t}u_td B_t,$$
 where
  $X_t$ with lift $u_t$ and $\tilde{X}_t$ with lift
$\tilde{u}_t$ solve the following equation
\begin{align*}%\label{3e4}
\begin{cases}
d X_t=\sqrt{2} u_t\circ d B_t+Z_t(X_t)d t, &
X_0=x,\vspace{0.3cm}
\\
d \tilde{X}_t= \sqrt{2}\tilde{u}_t\circ d
\tilde{B}_t
+\l\{Z_t(\tilde{X}_t)+U(t,X_t,\tilde{X}_t){\bf 1}_{\{X_t\neq
\tilde{X}_t\}}\r\}d t, & \tilde{X}_0=y.
\end{cases}
\end{align*}
Moreover,
for any $J\in C([0,T]\times M\times M)$ such that $J\geq I_Z$ on $({\rm Cut}_{\rm ST}\cup[0,T]\times D(M))^c$,
\begin{align*}
%\label{3e5}
d \rho_t(X_t,
\tilde{X}_t)\leq
\bigg\{&J(t,X_t,\tilde{X}_t) +\l<U(t,X_t,\tilde{X}_t),\nabla^{t}\rho_t(X_t,\cdot)(\tilde{X}_t)\r>_t{\bf
1}_{\{X_t\neq \tilde{X}_t\}}\bigg\}d t
\end{align*}
holds up to the coupling time $T_0:=\inf\{t\in [0,T]: X_t=\tilde{X}_t\}$ with convention $\inf \varnothing =T$.
  \item[$(ii)$] The first assertion in $(1)$ holds by replacing $P_{X_t,\tilde{X}_t}^t$ with
  $M^t_{X_t,\tilde{X}_t}$. In this case, for any $J\in C([0,T]\times M\times M)$ such that $J\geq I_Z$ on $({\rm Cut}_{\rm ST}\cup[0,T]\times D(M))^c$,
\begin{align}\label{3e6}
d \rho_t(X_t, \tilde{X}_t)\leq  2\sqrt{2}d
b_t+\bigg\{&J(t,X_t,\tilde{X}_t)+\l<U(t,X_t,\tilde{X}_t),\nabla^{t}\rho_t(X_t,\cdot)(\tilde{X}_t)\r>_t{\bf
1}_{\{X_t\neq \tilde{X}_t\}}\bigg\}d t\end{align} holds
up to the coupling time $T_0$, where $b_t$ is a
one-dimensional Brownian motion.
\end{enumerate}
\end{theorem}
%We divide the proof of this theorem into two part: the existence of  coupling processes and  the It\^{o} formula of the radiant process.
\begin{proof}
We first deal with the reflecting coupling
case for $U=0$. For the parallel coupling case, the proof is similar by replacing $M_{x,y}^t$ with $P_{x,y}^t$ below. The proof is divided into
two parts.

\emph{Part I: Construction of ($X_t, \tilde{X}_t$).} Let $u_t$ and $X_t:={\bf p}u_t$  solve \eqref{SDE-u}.
%$$\begin{cases}
% d u_t=\sqrt{2}\dsum_{i=1}^{d}H_{i}^t(u_t)\circ d
%B_t^{i}+H_{Z_t }^t(u_t)d t-\frac{1}{2}\dsum_{\alpha,
%\beta}\partial_tg_t(u_te_{\alpha},u_te_{\beta}) V_{\alpha \beta}(u_t)d t,\medskip\\
%u_0\in \mathcal{O}_t(M),\
%\end{cases}$$
%For given $x\neq y$ with $(x, y)\notin {\rm Cut}_t$, let
%$\gamma$ be the minimal geodesic from $x$ to $y$.
To get rid of the trouble that $M^t_{x,y}$ does not exist on ${\rm Cut}_t\cup D(M)$, we modify this operator so that it vanishes in a neighborhood of these sets.
To this end,
for any $n\geq 1$ and $\varepsilon\in (0,1)$, let $h_{n,\varepsilon}\in C^{\infty}([0,T]\times M\times M)$ so that
$$0 \leq h_{n,\varepsilon}\leq (1-\varepsilon), \  h_{n,\varepsilon}|_{{ C}_n^c}={1-\varepsilon},\ \mbox{and}\  \ h_{n,\varepsilon}|_{{C}_{2n}}=0,$$
where $${ C}_n:=\{(t,x,y)\in [0,T]\times M\times M: {\bf d}((t,x,y),{\rm Cut}_{\rm ST})\leq 1/n\}$$ and ${\bf d}$ is a distance on
$[0,T]\times M\times M$ such that, for $(s,x_1,y_1),(t,x_2,y_2)\in [0,T]\times M\times M$,
$${\bf d}((s,x_1,y_1),(t,x_2,y_2))=|t-s|+\sup_{r\in [0,T]}\rho_{r}(x_1,x_2)+\sup_{r\in [0,T]}\rho_r(y_1,y_2).$$
Moreover, let $\varphi_n\in C^{\infty}([0,T]\times M\times M)$ such that $0\leq \varphi_n\leq1$,
$\varphi_n|_{D_{2n}}=0$ and $\varphi_n|_{D_n^c}=1$, where
$$D_n:=\l\{(t,x,y)\in [0,T]\times M\times M: \rho_t(x,y)\leq \frac{1}{n}\r\}.$$
 Let $\tilde{u}^{n,\varepsilon}_t$ and
$\tilde{X}_t^{n,\varepsilon}:={\bf
p}\tilde{u}^{n,\varepsilon}_t$ solve the SDE
\begin{equation}\label{SDE2}
\begin{cases}
 d \tilde{u}_t^{n,\varepsilon}=\sqrt{2}(h_{n,\varepsilon}\varphi_n)(t,X_t,\tilde{X}_t^{n,\varepsilon})\dsum_{i=1}^{d}H_{i}^t(\tilde{u}_t^{n,\varepsilon})\circ d
\tilde{B}_t^{i}-\frac{1}{2}\dsum_{\alpha,
\beta}\mathcal{G}_{\alpha,\beta}( t,\tilde{u}_t^{n,\varepsilon} )V_{\alpha \beta}(\tilde{u}_t^{n,\varepsilon})d t\\
\qquad \qquad
+\sqrt{2[1-(h_{n,\varepsilon}\varphi_n)^2(t,X_t,\tilde{X}_t^{n,\varepsilon})]}\dsum_{i=1}^{d}H_{i}^t(\tilde{u}_t^{n,\varepsilon})\circ
d {B'_t}^{i}+H_{Z_t }^t(\tilde{u}_t^{n,\varepsilon})d
t,\medskip\\
\tilde{u}_0^{n,\varepsilon}\in \mathcal{O}_t(M),\ {\bf
p}\tilde{u}_0^{n,\varepsilon}=y,
\end{cases}
\end{equation}
where $B'_t$ is a Brownian motion on $\mathbb{R}^d$ independent of
$B_t$, and on $({\rm Cut}_{\rm ST})^c$,
 $$d
\tilde{B}_t={(\tilde{u}^{n,\varepsilon}_t)}^{-1}M^t_{X_t,
\tilde{X}_t^{n,\varepsilon}}u_td B_t .$$
Since the coefficients involved in (\ref{SDE2}) are at least $C^{1}$, there exists a solution
$\tilde{u}^{n,\varepsilon}_t$ solving the equation \eqref{SDE2}.
Let
\begin{align}\label{operator}
\tilde{L}_{n,\varepsilon}(t):=&\Delta_t (x)+\Delta_t (y)
+Z_t(x)+Z_t(y)
+h_{n,\varepsilon}\varphi_n(t,x,y)\sum_{i,j=1}^{d}\l<M^t_{x,y}V_i,
W_j\r>_tV_iW_j,
\end{align}
where $\{V_i\}$ and $\{W_i\}$ are orthonormal bases at $x$ and $y$
respectively. It is easy to see that
$(X_t,\tilde{X}_t^{n,\varepsilon})$ is generated by
$\tilde{L}_{n,\varepsilon}(t)$ and hence, is a coupling of
$L_t$-diffusion processes as the marginal operators of
$\tilde{L}_{n,\varepsilon}(t)$ coincide with $L_t$.

Now, let $\mathbb{P}^{x,y}_{n,\varepsilon}$ be the distribution
of $(X_t,\tilde{X}_t^{n,\varepsilon})$,
which is a probability measure on the path space  $M^T_x\times M^T_y$,  where
$M_x^{T}:=\{\gamma\in C([0,T], M):\gamma_0=x\}$ is equipped with
the $\sigma$-field $\mathscr{F}_x^T$ induced by all measurable cylindric functions.
%Let $\mathbb{P}^x$ be the distribution of the $L_t$-diffusion process starting from
%$x$.
%
%here and in the sequel, $(\xi_{\cdot}, \eta_{\cdot})\in C([0,T]; M\times M)$
%is the canonical path.
%, which is a probability
%measure on the path space $M^{T}_x\times M^{T}_y$, where
%$$M_x^{T}:=\{\gamma\in C([0,T], M):\gamma_0=x\}$$
%is equipped with the $\sigma$-field $\mathscr{F}_{x}^{T}$
%induced by all measurable cylindric functions. Since
%$(M_x^{T},\mathscr{F}_x^{T})$ is metrizable with a Polish metric $\widetilde{\rho}$,
%$M_x^{T}\times M_y^{T}$ is metrizable as a Polish space too.
   Since  $\{\mathbb{P}^{x,y}_{n,\varepsilon}: n\geq
1, 0<\varepsilon<1\}$ is a family of couplings for $\mathbb{P}^x$ and $\mathbb{P}^y$, it is easy to see that $\{\mathbb{P}^{x,y}_{n,\varepsilon}: n\geq
1, 0<\varepsilon<1\}$ is tight. Then, for each $\varepsilon >0$, there exists a probability measure $\mathbb{P}^{x,y}_{\varepsilon}$  and a subsequence $\{n_k\}$   such
 that $\mathbb{P}^{x,y}_{n_k,\varepsilon}$ converges weakly to
 $\mathbb{P}_{\varepsilon}^{x,y}$  as $k\rightarrow\infty$.  Meanwhile, there exists   a subsequence $\{\varepsilon_l\}$ such that $\mathbb{P}^{x,y}_{\varepsilon_l}$
 converges weakly to some $\mathbb{P}^{x,y}$ as $l\rightarrow\infty$.
%  Therefore, for each
%$\varepsilon>0$, there exists a probability measure
%$\mathbb{P}_{\varepsilon}^{x,y}$ and sub sequence $\{n_k\}$ such
%that $\mathbb{P}^{x,y}_{n_k,\varepsilon}\rightarrow
%\mathbb{P}^{x,y}_{\varepsilon}(k\rightarrow \infty)$ weakly and
%hence $\mathbb{P}^{x,y}_{\varepsilon}$ is once again a coupling of
%$\mathbb{P}^{x}$ and $\mathbb{P}^{y}$. Moreover, with let
%$\varepsilon_l\rightarrow 0$ so that
%$\mathbb{P}^{x,y}_{\varepsilon_l}\rightarrow \mathbb{P}^{x,y}$
%weakly,
Let $$\tilde{L}_t(x,y)=L_t(x)+L_t(y)+1_{({\rm Cut}_t\cup D(M))^{c}}(x,y)\sum_{i,j=1}^d\l<M_{x,y}^tV_i,W_j\r>_tV_iW_j.$$
Then it is easy to see that  $\mathbb{P}^{x,y}$ solves the martingale problem for $\tilde{L}_t$
 up to the coupling time, i.e. for any $f\in C_0^{\infty}(M\times M\setminus D(M))$,
 $$f(\xi_t,\eta_t)-\int_0^t\tilde{L}_sf(\xi_s,\eta_s)d s,\ \ 0 \leq t\leq T$$
 is a $\mathbb{P}^{x,y}$-martingale with respect to the natural filtration up to the coupling time.
Here and in the sequel,  $(\xi_{\cdot}, \eta_{\cdot})\in C([0,T]; M\times M)$
is the canonical path. It  is well known  that solutions to martingale problem for $\tilde{L}_s$ can be constructed as solutions to a stochastic differential equation.
%Let $(X_t, \tilde{X}_t)$ be the coordinate (or
%c\`{a}dl\`{a}g) process in $(M_x^{T}\times M_y^{T},
%\mathscr{F}_x^{T}\times \mathscr{F}_y^{T})$ and let
%$\{\mathscr{F}_t\}_{t\geq 0}$ be the natural filtration.
% Similar to that explained in the proof of \cite[Theorem 2]{ATW}, we first define
%$$\tilde{L}_t(x,y)=L_t(x)+L_t(y)+1_{({\rm Cut}_t\cup D(M))^{c}}(x,y)\sum_{i,j=1}^d\l<M_{x,y}^tX_i,Y_j\r>_tX_iY_j.$$
%  It is trivial to see that $\mathbb{P}^{x,y}$
Therefore, there exist two independent $d$-dimensional Brownian motions $B_t$ and $B'_t$ on a complete
 probability space $(\Omega, \mathscr{F}, \mathscr{F}_t,\mathbb{P})$, and two processes $X_t$, $\tilde{X}_t$
 such that
 $$
 \begin{cases}
d^{{\rm It\hat{o}}} X_t=\sqrt{2} u_td B_t+Z_t(X_t)d t, &
X_0=x,\vspace{0.3cm}
\\
d^{{\rm It\hat{o}}} \tilde{X}_t= \sqrt{2}{\bf 1}_{{\rm Cut}_t^c}(X_t, \tilde{X}_t)M^t_{X_t,\tilde{X}_t}u_t d{B}_t+ \sqrt{2}{\bf 1}_{{\rm Cut}_t}(X_t, \tilde{X}_t)\tilde{u}_td B'_t
+Z_t(\tilde{X}_t)d t, & \tilde{X}_0=y
\end{cases}
$$
holds up to $T_0:=\inf\{t\in[0,T]:X_t=\tilde{X}_t\}$, where $d^{{\rm It\hat{o}}}$ stands for the It\^{o}
 differential operator.
 By letting $\tilde{X}_t=X_t$ after $T_0$, we complete the proof of the first assertion.
\medskip

%This assertion can be checked similar as \cite[Lemma 2.1.2]{Wbook1}
%by observing that $L^{n,\varepsilon}_M(t)$ is strictly elliptic,
%then $\mathbb{P}^{n,\varepsilon}(A):=\mathbb{P}((X_t,
%\tilde{X}_t^{n,\varepsilon})\in A)$ has a density $p^{n,
%\varepsilon}_t(x,y)$ with respect to the product volume measure
%$g_t\otimes g_t$.
\emph{Part II: Proof of (\ref{3e6}).} (a)
  We only consider noncompact $M$.
For the compact case, the proof is simpler by dropping the stopping time
  $\tau$ below.
  %$:=\inf\{t\geq
%0: (\xi_t,\eta_t)\notin \mathbf{B}\times \mathbf{B}\}$ below,
Since the generator $\tilde{L}_{n,\varepsilon}(t)$ is strictly elliptic and all the coefficients involved are
  at least $C^1$, and hence by Remark \ref{rem-fun},
    $\mathbb{P}^{x,y}((X_t, \tilde{X}_t^{n,\varepsilon})\in \cdot)$ has a density $p^{n,\varepsilon}_t(z,w)$ with respect to $ \mu_t\otimes  \mu_t$. Moreover,
    ${\rm Cut}_t$ is closed  and $ \mu_t\otimes  \mu_t({\rm Cut}_t)=0$,  then it is easy to see that the  set
\begin{align}\label{3e7}
\{t\in[0,T]\ |\ (t, X_t,
\tilde{X}_t^{n,\varepsilon})\in {\rm Cut}_{\rm ST}\}
%\ \mbox{and}\ \
%\{t\in[0,T]\ | \ (X_t,\tilde{X}_t^{\varepsilon})\in {\rm Cut}_t\}
\end{align}
 has Lebesgue measure zero almost surely.
This implies ${\bf 1}_{{\rm
Cut}_t}(X_t,\tilde{X}_t^{n,\varepsilon})=0$, a.s.
Then,  by  \cite[Theorem 2]{Ku} (the It\^{o} formula for radial process $\rho_t(x,X_t)$, $x\in M$),
we have
\begin{align*}
d \rho_t(X_t,\tilde{X}_t^{n,\varepsilon})=&\sqrt{2}\sum_{i=1}^dh_{n,\varepsilon}\varphi_n(M^t_{X_t,\tilde{X}_t^{n,\varepsilon}}u_te_i)\rho_t(X_t,\cdot)(\tilde{X}_t^{n,\varepsilon})d B_t^i+\sqrt{2}\sum_{i=1}^du_te_i\rho_t(\cdot,\tilde{X}_t^{n,\varepsilon})(X_t)d B_t^i\notag\\
&+\sqrt{2[1-(h_{n,\varepsilon}\varphi)^2](t,X_t,\tilde{X}_t^{n,\varepsilon})}\sum_{i=1}^d\tilde{u}_t^{n,\varepsilon}e_i\rho_t(X_t,\cdot)(\tilde{X}_t^{n,\varepsilon})d {B_t'}^{i}\notag\\
&+(\tilde{L}_{n,\varepsilon}(t)+\partial _t)\rho_t(X_t,\tilde{X}_t^{n,\varepsilon})d t-d l_t^{n,\varepsilon}.
\end{align*}
This implies
\begin{align}\label{3e3}
d \rho_t(X_t,\tilde{X}_t^{n,\varepsilon})\leq&
\sqrt{2(h_{n,\varepsilon}\varphi_n+1)^2+2[1-(h_{n,\varepsilon}\varphi_n)^2]}\
d b^{n,\varepsilon}_t-d l_t^{n,\varepsilon}\notag\\
& +[h_{n,\varepsilon}\varphi_nI_{Z}+(1-h_{n,\varepsilon}\varphi_n)S](t,
X_t,\tilde{X}_t^{n,\varepsilon})d t\notag\\
=&
2\sqrt{(h_{n,\varepsilon}\varphi_n)(t,X_t,\tilde{X}_t^{n,\varepsilon})+1}\
d b^{n,\varepsilon}_t-d l_t^{n,\varepsilon}\notag\\
& +[h_{n,\varepsilon}\varphi_nI_{Z}+(1-h_{n,\varepsilon}\varphi_n)S](t,
X_t,\tilde{X}_t^{n,\varepsilon})d t,
\end{align}
where $b_t^{n,\varepsilon}$ is a one-dimensional Brownian motion,
$l_t^{n,\varepsilon}$ is an increasing process which increases only
when $(X_t,\tilde{X}_t^{n,\varepsilon})\in {\rm Cut}_t$, and for $(t,x,y)\notin {\rm Cut}_{\rm ST}$,
%\begin{align*}
$$S(t,x,y):=L_t\rho_t(\cdot,
y)(x)+L_t\rho_t(x,\cdot)(y)+\partial_t\rho_t(x,y).$$
%\end{align*}
Let  $\mathbf{B}$ be a fixed bounded smooth open
domain in $M$. Given $N\geq 1$, the Laplacian comparison theorem
implies that there exists a constant $C>0$ such that $S(t,x,y)\leq
C$ for all $(t,x,y)\in ([0,T]\times \overline{\mathbf{B}}\times \overline{\mathbf{B}})\cap ({\rm Cut}_{\rm ST}\cup D_N)^c$.
  Then, it follows
from (\ref{3e3}) that, when $(t,X_t,\tilde{X}_t^{n,\varepsilon})\in ([0,T]\times \mathbf{B}\times \mathbf{B})\cap D_N^c$ and $n\geq N$,
\begin{align}\label{3e8}
d \rho_t(X_t,\tilde{X}_t^{n,\varepsilon})=&
2\sqrt{(h_{n,\varepsilon}\varphi_n)(t,X_t,\tilde{X}_t^{n,\varepsilon})+1}\
d b^{n,\varepsilon}_t-d \tilde{l}_t^{n,\varepsilon}
\nonumber\\
&+[h_{n,\varepsilon}\varphi_nJ+(1-h_{n,\varepsilon}\varphi_n)C](t,
X_t,\tilde{X}_t^{n,\varepsilon})d t,
\end{align}
where $\tilde{l}_t^{n,\varepsilon}$
% $I_Z\in C([0,T]\times M\times M)$,  $I_Z\geq I^{Z}$ on $({\rm
%Cut}_{ST}\cup  [0,T]\times D(M))^c$ and $\tilde{l}^{n,\varepsilon}_t$
is a larger increasing process. Now let $f\in C^{2}(\mathbb{R})$
with $f'\geq 0$ and $f'|_{[0,1/N]}=0$. By the It\^{o} formula, we
obtain from (\ref{3e8}) that, for
%\begin{align*}
%&f\circ\rho_t(X_{t\wedge \tau_{n,\varepsilon}},X_{t\wedge
%\tau_{n,\varepsilon}}^{n,\varepsilon})-\int_0^{t\wedge\tau_{n,\varepsilon}}\l\{2(h_{n,\varepsilon}+1)f''\circ\rho+(\partial_s \rho+h_{n,\varepsilon}I_Z+(1-h_{n,\varepsilon})C)f'\circ\rho\r\}(s, X_s, \tilde{X}^{n,\varepsilon}_s)d s
%\end{align*}
%is a supermartingale, where $\tau_{n,\varepsilon}:=\inf\{t\in[0,T]:
%(X_t, \tilde{X}_t^{n,\varepsilon})\notin \mathbf{B}\times \mathbf{B}\}$ and  $\rho(t,x,y):=\rho_t(x,y)$. Therefore,
%for
 the coordinate process $(\xi_t,\eta_t)$ with $\tau:=\inf\{t\geq
0: (\xi_t,\eta_t)\notin \mathbf{B}\times \mathbf{B}\}$  and any $n\geq N$,
\begin{align*}
S_t^{n,\varepsilon}(f):=&f\circ\rho_t(\xi_{t\wedge
\tau},\eta_{t\wedge \tau})\\
&-\int_0^{t\wedge
\tau}\l\{2(h_{n,\varepsilon}+1)f''\circ\rho+\l[h_{n,\varepsilon}J+(1-h_{n,\varepsilon})C\r]f'\circ\rho\r\}(s,\xi_s,\eta_s)d
s
\end{align*}
is a $\mathbb{P}^{x,y}_{n,\varepsilon}$-supermartingale, where $\rho(t,\cdot,\cdot):=\rho_t(\cdot,\cdot)$, $t\in [0,T]$.
 Thus, for
any $0\leq t'<t\leq T$ and $\mathscr{F}_{t'}$-measurable nonnegative $\phi\in
C_b(M_x^{T}\times M_y^{T})$, one has
\begin{align}\label{3e9}
\mathbb{E}^{x,y}_{n,\varepsilon}\phi S_t^{n,\varepsilon}(f)\leq
\mathbb{E}^{x,y}_{n,\varepsilon}\phi S_{t'}^{n,\varepsilon}(f),\ \ n\geq
N,\end{align} where %and similarly in the sequel for
%$\mathbb{E}^{x,y}_{\varepsilon}$ and $\mathbb{E}^{x,y}$ with respect
%to $\mathbb{P}^{x,y}_{\varepsilon}$ and $\mathbb{P}^{x,y}$,
$\mathbb{E}^{x,y}_{n,\varepsilon}$ is the expectation with respect
to $\mathbb{P}^{x,y}_{n,\varepsilon}$.

(b)
Since   the strict ellipticity of $\tilde{L}_{n,\varepsilon}(t)$  is uniform in $n$, and  all  coefficients of this operator  are  uniformly bounded in $n$ on  any compact set $ K\subset  M\times M$, the density $p_t^{n,\varepsilon}$, $t\in (0,T]$ satisfies a Harnack inequality uniform in $n$ on $ K$ (see \cite[Theorem 1]{Tr}\footnote{By a localization argument, there exist finite local coordinates covering the compact set $K$. Meanwhile,  the corresponding heat kernel is uniformly bounded in $n$ on any local coordinates by \cite[Theorem 1]{Tr} and the reference measures are equivalent to each other on $K$,  thus the heat kernel is uniformly bounded in $n$ on the compact set $K$.}). That is  for each $t\in(0,T]$, there exists a constant $C>0$ such that $p_t^{n,\varepsilon}\leq C$ on $ K$ for all $n\geq 1$. Let $G$ be an open set containing ${\rm Cut}_t$. Then,
$${\mathbb{P}}^{x,y}_{\varepsilon}((\xi_t,\eta_t)\in G\cap K^{\circ})=\varliminf_{k\rightarrow \infty}{\mathbb{P}_{n_k,\varepsilon}^{x,y}((\xi_t,\eta_t)\in G\cap K^{\circ})}\leq C\mu_t\otimes\mu_t(G\cap K^{\circ}),$$
where $K^\circ$ is the inner set of $K$.
Since ${\rm Cut}_t$ is a  closed set of measure zero with respect to $\mu_t\otimes \mu_t$, by letting $G\rightarrow{\rm Cut}_t$ and then  $K\rightarrow M\times M$, we obtain $$\mathbb{P}^{x,y}_{\varepsilon}((t,\xi_t,\eta_t)\in {\rm Cut}_{\rm ST})=\mathbb{P}^{x,y}_{\varepsilon}((\xi_t,\eta_t)\in {\rm Cut}_{t})=0.$$
%Now, we need to show that, for any $t\in (0,T]$, the set
%$$\mathbb{P}^{x,y}_{\varepsilon}((t,\xi_t,\eta_t)\in {\rm Cut}_{\rm ST})=0.$$
%where $\mathbb{E}^{x,y}_{\varepsilon}$ is the expectation with respect
%to $\mathbb{P}^{x,y}_{n,\varepsilon}$.
%Since   the strictly ellipticity of $\tilde{L}_{n,\varepsilon}(t)$  is uniform in $n$, and  all  coefficients of this operator is  uniformly bounded in $n$ on  any compact set $ K\subset  M\times M$, the density $p_t^{n,\varepsilon}$, $t\in (0,T]$ satisfies a Harnack inequality uniform in $n$ on $ K$ (see \cite[Theorem 1]{Tr}). That is  for each $t\in(0,T]$, there exists a constant $C>0$ such that $p_t^{n,\varepsilon}\leq C$ on $ K$ for all $n\geq 1$. Let $G$ be an open set containing ${\rm Cut}_t$. Then,
Therefore,
we have, for any $\delta>0$, there
exists $m\geq 1$ such that
\begin{align}\label{ineq1}
\int_0^t\mathbb{P}^{x,y}_{\varepsilon}((s,\xi_s,\eta_s)\in
C_m)d s\leq \delta.\end{align}
%where $C_m^s:=\{(x,y):
%\rho_{g_s\otimes g_s}((x,y), {\rm Cut}_s)\leq \frac{1}{m}\}$.
 Moreover, it is easy to see that $C_m$ is  closed since ${{\bf d}}$ is continuous and ${\rm Cut}_{\rm ST}$ is closed (see \cite{MT}), which implies
$$\varlimsup_{k\rightarrow\infty}\mathbb{P}^{x,y}_{n_k,\varepsilon}((s,\xi_s, \eta_s)\in C_m)\leq \mathbb{P}^{x,y}_{\varepsilon}((s,\xi_s,\eta_s)\in C_m), \ \ 0\leq s\leq T.$$
With this and \eqref{ineq1},  we obtain that
\begin{align}\label{ineq2}\varlimsup_{k\rightarrow\infty}\int_0^t\mathbb{P}^{x,y}_{n_k,\varepsilon}((s,\xi_s,
\eta_s)\in C_m)d s\leq \delta.
\end{align}
Let
$$S_t^{\varepsilon}(f)=f\circ
\rho_t(\xi_{t\wedge \tau}, \eta_{t\wedge \tau})-\int_0^{t}{\bf
1}_{\{s<\tau\}}\big[2(2-\varepsilon)f''\circ
\rho+((1-\varepsilon)J+\varepsilon
C)f'\circ\rho\big](s,\xi_s,\eta_s)d s.$$
Combining  this with
 (\ref{3e9}),
(\ref{ineq1}) and  (\ref{ineq2}), and noting that $h_{n,\varepsilon}=1-\varepsilon$ on $C_m^c$ for $n\geq m$,
we obtain that there exists some constant $c_1>0$, such that
\begin{align}\label{e:S-t}
&\mathbb{E}^{x,y}_{\varepsilon}S_t^{\varepsilon}(f)\phi
\\
=&\mathbb{E}^{x,y}_{\varepsilon}\phi\Big\{f\circ
\rho_t(\xi_{t\wedge \tau}, \eta_{t\wedge \tau})-\int_0^t{\bf
1}_{\{s<\tau\}}\big[2(2-\varepsilon)f''\circ
\rho+((1-\varepsilon)J+\varepsilon
C)f'\circ\rho\big](s,\xi_s,\eta_s)d s\Big\}\nonumber\\
=&\lim_{k\rightarrow
\infty}\mathbb{E}_{n_k,\varepsilon}^{x,y}\phi\Big\{f\circ\rho_t(\xi_{t\wedge
\tau}, \eta_{t\wedge \tau})-\int_0^t{\bf
1}_{\{s<\tau\}}\big[2(2-\varepsilon)f''\circ\rho+((1-\varepsilon)J+\varepsilon
C)f'\circ\rho\big](s,\xi_s,\eta_s)d s\Big\}\nonumber\\
\leq &\varliminf_{k\rightarrow
\infty}\mathbb{E}^{x,y}_{n_k,\varepsilon}S_t^{n_k,\varepsilon}(f)\phi+\delta
c_1\leq \varliminf_{k\rightarrow
\infty}\mathbb{E}^{x,y}_{n_k,\varepsilon}\phi S_{t'}^{n_k,\varepsilon}(f)+\delta c_1\nonumber\\
\leq &\varliminf_{k\rightarrow
\infty}\mathbb{E}^{x,y}_{n_k,\varepsilon}\phi\Big\{f\circ\rho_t(\xi_{t'\wedge\tau},\eta_{t'\wedge
\tau})-\int_0^{t'}{\bf
1}_{\{s<\tau\}}\big[2(2-\varepsilon)f''\circ\rho\nonumber\\
&\qquad \qquad \qquad+((1-\varepsilon)J+\varepsilon
C)f'\circ\rho\big](s,\xi_s,\eta_s)d s\Big\}+2\delta c_1\nonumber\\
=&\mathbb{E}^{x,y}_{\varepsilon}\phi S_{t'}^{\varepsilon}(f)+2\delta
c_1.\nonumber
\end{align}
Letting $\delta\rightarrow 0$, we conclude that
\begin{align}\label{3e10}
\mathbb{E}^{x,y}_{\varepsilon}\phi S^{\varepsilon}_t(f)\leq
\mathbb{E}^{x,y}_{\varepsilon}\phi S_{t'}^{\varepsilon}(f).
\end{align}
Similarly, let
$$S_t(f)=f\circ\rho_{t\wedge \tau}(\xi_{t\wedge \tau},\eta_{t\wedge \tau})-\int_0^{t\wedge \tau}\l[J f'\circ\rho+4f''\circ\rho\r](s,\xi_s,\eta_s)d s.$$
Then $S^{\varepsilon}_t(f)\rightarrow S_t(f)$ uniformly as
$\varepsilon \rightarrow 0$. By
(\ref{3e10}) and then with  a same discussion as in \eqref{e:S-t},  we obtain that
$$
\mathbb{E}^{x,y}\phi S_t(f)\leq \mathbb{E}^{x,y}\phi S_{t'}(f),
$$
 for all $t>t'$ and $\mathscr{F}_{t'}$-measurable nonnegative $\phi\in
C_b(M_x^T\times M_y^T)$. This means that $S_t(f)$ is a $\mathbb{P}^{x,y}$-supermartingale.

(c) Now, let $f\in C^{2}(\mathbb{R})$ with $f'\geq 0$ be fixed. For any
$N\geq 1$, let
$$T_N:=\inf\{t\in [0,T]: \rho_t(\xi_t,\eta_t)\leq 1/N\}.$$
One has $T_N\rightarrow T_0$ as $N\rightarrow \infty$. Let us take
$\tilde{f}\in C^2(\mathbb{R})$ such that $\tilde{f}'\geq 0$,
$\tilde{f}'|_{[0,1/(2N)]}=0$ and $\tilde{f}=f$ on $[1/N,
\infty)$. Let
\begin{equation*}%\label{Nf}
    d N_t(f):=d f\circ\rho_t(\xi_t,\eta_t)-\l[ J f'\circ\rho+4f''\circ\rho\r](t,\xi_t,\eta_t)d t, \ N_0(f):=f\circ\rho_0(x,y).
\end{equation*}
Then due to the concrete choice of $\tilde{f}$, one has
$N_{t\wedge T_N \wedge \tau}(f)=S_{t\wedge T_N\wedge
\tau}(\tilde{f})$ and hence $N_{t\wedge T_N \wedge\tau}$ is a
 supermartingale with respect to $\mathbb{P}^{x,y}$. Letting $N\rightarrow \infty$,
we conclude that $N_{t\wedge T_0\wedge \tau}(f)$ is also a
$\mathbb{P}^{x,y}$-supermartingale. Now,   choosing explicit $f$  leads to obtain \eqref{3e6}.
The rest part  is similar to the proof \cite[Theorem 2.1.1]{Wbook1}(c)(d), we omit it here.

When $U\neq 0$, by replacing  $L_t(y)$ with $L_t(y)+U(t,x,y)$ and with  a similar argument as above,  we then complete the proof.
\end{proof}

\subsection{Transportation-cost inequalities}
In this subsection, we  apply  coupling methods to get some  transportation-cost inequalities.

%Let $\varphi:\mathbb{R}^+\rightarrow \mathbb{R}^+$ be a non-decreasing function. Then, we define
%a cost function
%$C_t(x,y)=\varphi(\rho_t(x,y)).$
%To this cost function, we associate the Monge-Kantorovich minimization between
%two probability measures on $M$,
%\begin{align*}%\label{eq3}
%W_{C_t}(\mu,\nu)=\inf_{\eta\in \mathscr{C}(\mu,\nu)}\int_{M\times M}C_t(x,y)d \eta(x,y),\end{align*}
We denote
$$W_{p,t}(\mu,\nu)=\l(\inf_{\eta\in \mathscr{C}(\mu,\nu)}\int_{M\times M}\rho_t^p(x,y)d \eta(x,y)\r)^{1/p},$$
the Wasserstein distance associated to $p\geq 1$, where $\mathscr{C}(\mu,\nu)$ is the set of all probability measures on $M\times M$ with marginal $\mu,
\nu\in \mathscr{P}(M)$ and $\mathscr{P}(M)$ is  the space of all the  probability measures on $M$.

Our main task in this subsection is to prove the transportation-cost inequalities on the manifolds carrying geometric flows.
 In fact, these inequalities have already investigated by  constructing horizontal diffusion processes, see \cite{ACT1}.  Now we review them by using coupling methods.  Note that when the metric is independent of $t$, the equivalence of (i) and (ii) is due to \cite{Sturm}.
\begin{theorem}\label{3t3}
Let $p\geq 1$ and  $K\in C([0,T_c))$. Then the following assertions
are equivalent to each other.
\begin{enumerate}
  \item [$(i)$] The curvature condition \eqref{CV}  holds for the function $K$.
   \item [$(ii)$] For any $x,y\in M$ and $0\leq s\leq t< T_c$, $$W_{p,t}(\delta_xP_{s,t},\delta_yP_{s,t})\leq \rho_s(x,y)e^{-\int_s^tK(r)d
   r}.$$
  \item [$(ii')$] For any $\nu_1, \nu_2\in \mathscr{P}(M)$ and $0\leq s\leq t< T_c$,
  $$W_{p,t}(\nu_1P_{s,t},\nu_2P_{s,t})\leq W_{p,s}(\nu_1,\nu_2)e^{-\int_s^tK(r)d r}.$$
\end{enumerate}

\end{theorem}
\begin{proof}
%(a)$\  $\mathbf{(i)\ is\ equivalent \ to \ (ii)\ \mbox{or}\ (ii').}$ %, we have
%$$\frac{1}{2}\int_{\gamma}{\partial_t
%g_t}(\dot{\gamma}(s),\dot{\gamma}(s))d
%s+I^{Z }_t(x, y) \leq -K(t)\rho(x, y).$$
% So,
It is easy to see that (ii') and (ii) are equivalent. It suffices for us to show that ``(i)$\Leftrightarrow$ (ii)".
By using coupling process $(X_t,\tilde{X}_t)$
by parallel displacement in Theorem \ref{3t2} with $U=0$, we obtain
from (i) that
\begin{align*}W_{p,t}(\delta_xP_{s,t},\delta_yP_{s,t})&\leq
\l\{\mathbb{E}\l(\rho_t(X_t,\tilde{X}_t)^p\big|(X_s,
\tilde{X}_s)=(x,y)\r)\r\}^{1/p}\leq \rho_s(x,y)e^{-\int_s^t K(u)d u}.\end{align*}
 That is, (i) implies (ii).
 %Indeed, let $\pi\in
%\mathscr{C}(\nu_1,\nu_2)$ such that
%$W_{p,s}(\nu_1,\nu_2)=\pi(\rho_s^p)^{1/p}$. Then from
%Monge-Kontorovich dual formula  and (2) we obtain
%\begin{align}
%W_{p,t}(\nu _1P_{s,t}, \nu _2P_{s,t})^p&\leq \int_{M\times
%M}W_{p,s}(\delta_xP_{s,t},\delta_yP_{s,t})^p \pi(d x,
%d y)\nonumber\\
%&\leq e^{-p\int_s^tK(u)d u}W_{p,s}({\nu}_1,{\nu}_2)^p.
%\end{align}
On the other hand, if (ii) holds, then letting $\Pi _{x,y}$ be the
optimal coupling for $\delta_x P_{s,t}$ and $\delta_y P_{s,t}$ for
the $L^p$-transportation cost for $f\in C^1(M)$ and $f$ is constant outside a compact set, we have
\begin{align*}
&|\nabla^sP_{s,t}f|_s\leq \lim_{y\rightarrow
x}\frac{\int_{M\times M}|f(x')-f(y')|\Pi_{x,y}(d x',d
y')}{\rho_s(x,y)}\\
\leq &\lim_{y\rightarrow x}\l[\int_{M\times
M}\l(\frac{|f(x')-f(y')|}{\rho_t(x',y')}\r)^{p/(p-1)}\Pi_{x,y}(d
x',d y')\r]^{(p-1)/p}\cdot
\frac{W_{p,t}(\delta_xP_{s,t},
\delta_yP_{s,t})}{\rho_s(x,y)}\\
\leq & e^{-\int_s^tK(u)d
u}\l(P_{s,t}|\nabla^{t}f|_t^{p/(p-1)}\r)^{(p-1)/p}.
\end{align*}
By Theorem \ref{3t1} ``(ii)$\Rightarrow$(i)", this implies (i).
\end{proof}

\begin{remark}\label{coupling-rem}
Actually, coupling methods are powerful tools for investigating similar problems on  other general spaces.
\begin{itemize}
  \item [(i)] Recently,  transportation-cost inequalities on the path space of $L_t$-diffusion space have been investigated in \cite{Ch2} by construcing suitable coupling process.
  \item [(ii)] We would like to indicate that the dimension-free Harnack inequalities estabalished in Theorem \ref{Har}  also can be proved by using coupling methods. Indeed,  the author investigated dimension-free Harnack inequalities for reflecting diffusion semigroups on time-varing manifold when $\partial M\neq \varnothing$, see \cite{Ch3} for details.
\end{itemize}
\end{remark}

\bigskip
\bigskip

\noindent\textbf{Acknowledgements}  \
The author was supported in part by NSFC (Grant No. C10915252), Zhejiang Provincial Natural Science Foundation of China (Grant No. GB16021090058) and the Natural Science Foundation of Zhejiang University of Technology (Grant No. 2014XZ011).

\bibliographystyle{plain}%

\bibliography{Bib-c2015}

\end{document}